\newtheorem{thm}{Theorem}[section]
\newtheorem{lemma}[thm]{Lemma}
\newtheorem{prop}[thm]{Proposition}
\newtheorem{cor}[thm]{Corollary}
\newtheorem{lemmadef}[thm]{Lemma - Definition}
\theoremstyle{remark}
\newtheorem{remark}[thm]{Remark}
\theoremstyle{definition}
\newtheorem{defi}[thm]{Definition}
\newtheorem{example}[thm]{Example}
\newcommand{\la}{\longrightarrow}
\newcommand{\ov}{\overline}
\newcommand{\Div}{\operatorname{Div}}
\newcommand{\supp}{\operatorname{Supp}}
\newcommand{\Pic}{\operatorname{Pic}}
\newcommand{\Jac}{\operatorname{Jac}}
\newcommand{\mdeg}{\underline{\operatorname{deg}}}
\newcommand{\val}{\operatorname{val}}
\def\L{\mathcal L}
\def\X{\mathcal X}
\newcommand{\PP}{\mathbb{P}}
\newcommand{\Z}{\mathbb{Z}}
\newcommand{\R}{\mathbb{R}}
\def\ZZ{\mathcal Z}
\def\mo{\underline{0}}
\def\mP{\underline{P}}
\newcommand{\sing}{X_{\text{sing}}}
\newcommand{\Ma}{M^{\rm{{alg}}}}
\newcommand{\Mt}{M^{\rm{{trop}}}}
\newcommand{\Mgt}{{M_g^{\rm trop}}}
\newcommand{\Mgb}{\ov{M_g}}
\newcommand{\Mgrd}{M_{g,d}^r}
\newcommand{\Mgrdb}{\ov{M_{g,d}^r}}
\newcommand{\hG}{\widehat{G}}
\newcommand{\hv}{\widehat{v}}
\newcommand{\hu}{\widehat{u}}
\newcommand{\hw}{\widehat{w}}
\newcommand{\hT}{\widehat{T}}
\newcommand{\hX}{\widehat{X}}
\newcommand{\hY}{\widehat{Y}}
\newcommand{\ooG}{\overline{G}}
\newcommand{\oov}{\overline{v}}
\newcommand{\ooT}{\overline{T}}
\newcommand{\oophi}{\overline{\phi}}
\newcommand{\ooiota}{\overline{\iota}}
\newcommand{\oow}{\overline{w}}
\newcommand{\he}{\widehat{e}}
\newcommand{\hphi}{\widehat{\phi}}
\newcommand{\hiota}{\widehat{\iota}}
\newcommand{\Ev}{E^{\rm{ver}}}
\newcommand{\Eh}{E^{\rm{hor}}}
\begin{document}
\title{ Gonality of algebraic curves and graphs}

\author{ Lucia Caporaso}
 \address{Dipartimento di Matematica e Fisica,
 Universit\`a Roma Tre, 
 Largo San Leonardo Murialdo 1,
 00146 Roma (Italy)}
 \email{caporaso@mat.uniroma3.it}
  \keywords{Algebraic  curve, admissible covering,  moduli of stable curves, linear series, weighted graph, harmonic morphism, tropical curve}
 \subjclass[2000]{14H10, 14H51, 14T05, 05C99}
\maketitle
\begin{center}

%\today

%{\it  Please do not distribute.}

\begin{abstract}
We define
$d$-gonal weighted  graphs using  ``harmonic indexed" morphisms, and 
  prove
that a combinatorial locus of $\Mgb$   contains a $d$-gonal curve if the corresponding graph is $d$-gonal and of Hurwitz type. 
Conversely the dual graph of a $d$-gonal stable curve  is equivalent to a $d$-gonal graph of Hurwitz type.
The  hyperelliptic case   is studied in details.
For  $r\geq 1$, we show that 
  the dual graph of a $(d,r)$-gonal stable is  the underlying graph of a tropical curve   admitting a degree-$d$ divisor   of
rank at least $r$. 
\end{abstract}

\end{center}

\tableofcontents

\section{Introduction and Preliminaries}
 \subsection{Introduction}
 In this paper we study the interplay between the theory of linear series on algebraic curves, and the    theory of  linear series on graphs.
 
 A smooth   curve $C$ is $d$-gonal if it admits a linear series of degree $d$ and rank  $1$; more generally, $C$ is $(d,r)$-gonal if it admits a linear series of degree $d$ and rank $r$.
 A stable, or  singular,  curve is defined to be
  $(d,r)$-gonal,
 if it is the specialization of a family of smooth  $(d,r)$-gonal curves.
This rather unwieldy definition is due to the fact that
  the divisor theory of singular curves is quite complex; for example,   every reducible curve admits infinitely many divisors of degree $d$ and rank $r$, for every $d$ and $r\geq 0$.
Moreover characterizing $(d,r)$-gonal curves   is a well known difficult problem.

On the other hand, the moduli space of Deligne-Mumford stable curves, $\Mgb$, has a natural stratification into ``combinatorial" loci,
 parametrizing  curves having a certain  weighted graph as dual graph. 
It is thus natural to ask whether the existence in a combinatorial locus of a $(d,r)$-gonal curve
can be detected uniquely from the corresponding graph and its divisor theory. 

In fact, in recent times a   theory for divisors on   graphs has been 
set-up and 
developed in a purely combinatorial way, revealing some  remarkable analogies with the algebro-geometric case; see
\cite{BdlHN}, \cite{BN}, \cite{BNRR}  for example. One of the goals of this paper is to contribute to this developement; we give a new definition for morphisms between graphs, which we call {\it indexed morphisms}, and then introduce {\it harmonic indexed morphisms}. Our definition is inspired by the theory of admissible coverings developed by J. Harris and D. Mumford in \cite{HM}, and  generalizes  the combinatorial definition of  {\it harmonic  morphisms} given   by M. Baker, S. Norine and  H. Urakawa in \cite{BN} and  \cite{U} for weightless graphs; this is why we use the word ``harmonic". 
Harmonic indexed  morphisms   have a well defined degree, and satisfy the Riemann-Hurwitz formula with an effective ramification divisors.

We say that a graph is $d$-gonal if it admits a non-degenerate harmonic indexed morphism, $\phi$,  of degree $d$ to a tree; furthermore
we say that it is of Hurwitz type if      the Hurwitz existence problem  naturally associated to $\phi$   has a positive solution;  see  
Definition~\ref{imdef} for details. In particular, if $d\leq 3$ every $d$-gonal graph is of Hurwitz type. Then we prove the following:

\begin{thm}
\label{mainstable}
 If $(G,w)$ is a $d$-gonal  weighted  stable graph of Hurwitz type, there exists a (stable) $d$-gonal curve whose dual graph is $(G,w)$.
 Conversely, the dual graph of a stable $d$-gonal curve is equivalent to a $d$-gonal graph of Hurwitz type.
\end{thm}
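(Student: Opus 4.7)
The plan is to use the Harris--Mumford theory of admissible coverings as the bridge between the combinatorial and algebro-geometric notions of gonality. The basic dictionary says that an admissible cover of degree $d$ from a nodal curve $X$ to a tree-like nodal curve $Y$ of genus $0$ induces, by passing to dual graphs, a map of weighted graphs in which the local degree on an edge is constant from both sides and the local degrees around a vertex behave harmonically. This is precisely the combinatorial shadow captured by a harmonic indexed morphism to a tree, and the Hurwitz type hypothesis is exactly what is needed to realize the prescribed local ramification data by an honest branched cover of smooth curves at each vertex.

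For the first implication, I would start from a non-degenerate harmonic indexed morphism $\phi\colon (G,w)\to T$ of degree $d$ of Hurwitz type, and build an admissible covering $f\colon X\to Y$ whose combinatorial datum is $\phi$. Concretely, $Y$ is the tree-like curve of smooth rational components whose dual graph is $T$; over each vertex $u$ of $T$ I use the Hurwitz type assumption to produce a branched covering $C_v\to \PP^1$ of degree equal to the local degree of $\phi$ at $v$, of genus $w(v)$, with ramification profile at the marked points dictated by the indices of the edges of $G$ incident to $v$. The components $C_v$ are then glued along the nodes in the pattern prescribed by $G$ to form the nodal curve $X$; the matching of ramification indices across each edge of $G$ guarantees that $f\colon X\to Y$ is admissible in the sense of \cite{HM}. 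Since the moduli space of admissible covers is smooth at $f$ and its image in $\Mgb$ meets the locus of smooth curves, $X$ is the stable limit of a family of smooth $d$-gonal curves; this is exactly what it means for the stable curve $X$, whose dual graph is $(G,w)$ by construction, to be $d$-gonal.

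For the converse I would run the same construction in reverse. Given a stable $d$-gonal curve $X$ with dual graph $(G,w)$, choose a one-parameter smoothing $\mathcal X\to B$ together with a generic $g^1_d$; after a finite base change and semistable reduction, Harris--Mumford produces an admissible cover $f\colon X'\to Y$ extending the generic $g^1_d$, where $X'$ is a semistable model of $X$ obtained by inserting chains of rational bridges at some of the nodes. The dual graph $(G',w)$ of $X'$ is an edge-refinement (subdivision) of $(G,w)$, and the combinatorial data of $f$ on each edge and vertex define a harmonic indexed morphism $\phi\colon (G',w)\to T$ of degree $d$ to the dual tree of $Y$; the restrictions $f|_{C_v}$ provide, vertex by vertex, a solution to the Hurwitz existence problem associated to $\phi$, so $(G',w)$ is a $d$-gonal graph of Hurwitz type in the equivalence class of $(G,w)$.

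The main obstacle I expect is bookkeeping at the two ends of the correspondence: on the existence side, arranging the Hurwitz solution so that the local ramification profiles at the vertices of $G$ match along every edge (this is where the ``of Hurwitz type'' hypothesis must be used in its full strength, and where the indices on edges attached to weighted vertices force the right genus via Riemann--Hurwitz); on the converse side, verifying that the subdivision of $(G,w)$ produced by semistable reduction does not introduce essentially new combinatorial behaviour but only the expected insertion of rational bridges, so that the resulting refinement really is equivalent to $(G,w)$ in the sense to be fixed in the paper. The Riemann--Hurwitz formula for harmonic indexed morphisms, stated earlier with an effective ramification divisor, should ensure that both constructions produce stable (and not merely semistable) curves of the correct genus.
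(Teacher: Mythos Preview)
Your overall strategy matches the paper's: use admissible coverings as the bridge, building the covering vertex-by-vertex via Riemann existence in the forward direction, and reading off the dual graph-map of an admissible cover in the converse. But there is a genuine gap in your forward construction.

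You implicitly assume that $\phi$ is a \emph{homomorphism}, i.e.\ that every edge of $G$ maps to an edge of $T$. The definition of $d$-gonal only requires a harmonic indexed \emph{morphism}, and such a $\phi$ may contract edges (those with index $0$). If $e\in E(G)$ joins $v_1,v_2$ and is contracted to $u\in V(T)$, then in your construction both $C_{v_1}$ and $C_{v_2}$ map to the same component $D_u\cong\PP^1$, and the node $N_e$ of $X$ is forced to lie over a \emph{smooth} point of $Y$, violating the covering axiom $\alpha^{-1}(Y_{\rm sing})=\sing$. The paper repairs this with a preliminary step (Lemma~\ref{lemma-hom}): one replaces $\phi$ by a homomorphism $\hphi:(\hG,\hw)\to\hT$, where each contracted edge $e$ is subdivided by a new weight-zero vertex $\hv_e$, a new leaf is grafted onto $T$ at $u$, and enough weight-zero leaves are attached at the remaining vertices over $u$ to restore harmonicity and the degree. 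Only then does the vertex-by-vertex construction apply (Proposition~\ref{corr}); one stabilizes $\hX$ afterwards to recover a curve with dual graph $(G,w)$. Your list of anticipated obstacles misses this entirely; by contrast the matching of ramification indices across an edge, which you do worry about, is automatic from the definition of an indexed morphism.

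Two smaller corrections. First, you still have to check that the covering you build is \emph{admissible}, i.e.\ that the pointed target $(Y;y_1,\dots,y_b)$ is stable; this is precisely where the stability of $(G,w)$ is used, and the paper verifies it by a direct case analysis on the vertices of $\hT$. Second, in the converse, for $d\geq 3$ the source $\hX$ of the admissible cover need not be semistable: its dual graph may acquire weight-zero \emph{leaves} (rational tails), not only subdivided edges, so what you obtain is a graph \emph{equivalent} to $(G,w)$ (same stabilization), not in general a refinement.
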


This Theorem follows immediatly from the more general Theorem~\ref{main}, whose  proof combines the theory of  admissible coverings with properties of   harmonic indexed  morphisms.
 
Next, for all $r\geq 1$ we prove Theorem~\ref{mainconv}, which, in particular,  states that
{\it the dual graph of a  $(d,r)$-gonal curve 
admits a refinement  admitting a divisor of rank $r$ and degree $d$.}
 
 The proof of this theorem uses different methods than the previous one:    the   theory of stable curves,
 and a generalization, from \cite{AC},   of  Baker's specialization lemma  \cite[Lemma 2.8]{bakersp}.

Testing whether a graph admits a divisor of given degree and rank 
 involves only a finite number of steps, and
 can be  done by a computer; hence Theorem~\ref{mainconv} 
 yields a handy necessary condition for a curve to be $(d,r)$-gonal.

This theorem has  also consequences on tropical curves. In fact 
  the moduli space of tropical curves of genus $g$, $\Mgt$, has a  stratification 
 indexed by stable weighted graphs  exactly as $\Mgb$. Using our results we obtain  that 
 if a combinatorial stratum of $\Mgb$ contains a
 $(d,r)$-gonal curve, so does the corresponding stratum of $\Mgt$; see Subsection~\ref{tropsec} for more details. The connections between the divisor theories of algebraic and tropical curves
 have been object of much interest in recent years; in fact some closely related  issues are currently being investigated, under a completely different perspective, in a joint project of  O. Amini, M. Baker, E. Brugall\'e and J. Rabinoff. 
 We refer also to  \cite{BPR}, 
  \cite{BMV}, \cite{Chbk},  \cite{chan}   and \cite{LPP} 
  for some recent work on the relation between algebraic and  tropical geometry.
 
The paper is organized in four sections; the first recalls   definitions and results from algebraic geometry and graph theory needed in the sequel, mostly from  \cite{HM}, \cite{gac}, \cite{BNRR} and \cite{AC}.
In Section~\ref{mainsec}  we study the case $r=1$ and prove
Theorem~\ref{main} (and Theorem~\ref{mainstable}). The next section studies the case $r\geq 1$
and extends the analysis to  tropical curves; the main result here is  Theorem~\ref{mainconv}.
In Section~\ref{hypsec} we concentrate on the hyperelliptic case, and develop the basic theory by extending some of the results of \cite{BN}.  It turns out that for this case the analogies between the algebraic and the combinatorial setting are stronger; see Theorem~\ref{bridgethm}. 
 
 I wish to thank   M. Baker,  E. Brugall\'e, M. Chan, R Guralnick, and F. Viviani for enlightening discussions related to the topics in this paper. I am grateful to S. Payne for pointing out an error in the first version of Theorem~\ref{main}.
  
  \subsection{Graphs and dual graphs of curves}
\label{gcdef}
Details about the forthcoming topics may be found in  \cite{gac}  and \cite{Chbk}.

 Unless we specify otherwise, by the word  ``curve" we mean reduced, projective algebraic variety of dimension one over the field of complex numbers; we always assume that our curves have   at most nodes as singularities. 
The genus of a curve is the arithmetic genus.

The graphs we consider, usually denoted by a ``$G$" with some decorations,
are connected  graphs (no metric) admitting loops and multiple edges, unless differently stated.
For the reader's convenience we recall some basic terminology from graph theory. 
Our  conventions
 are chosen to fit both the combinatorial and algebro-geometric set up. 
 For a graph $G$ we denote by $V(G)$ the set of its vertices, by $E(G)$ the set of its edges and by $H(G)$ the set of its half-edges. The set of half-edges comes with a fixed-point-free involution   
whose orbits, written $\{h,  \ov{h}\}$, bijectively correspond to $E(G)$,
and with a surjective {\it endpoint} map $\epsilon:H(G)\to V(G)$.
For $e\in E(G)$ 
corresponding to the half-edges $h,  \ov{h}$ we often write $e=[h,  \ov{h}]$.

A {\it loop-edge} is an edge $e=[h,  \ov{h}]$ such that $\epsilon(h)=\epsilon(\ov{h})$.

A {\it leaf} is a pair, $(v,e)$,
of a vertex and  an edge, where $e$ is  not a loop-edge and is the unique edge adjacent to $v$.
We say that $e$ is a {\it leaf-edge} and $v$ is a {\it leaf-vertex}.

A {\it bridge} is an edge  $e$ such that $G\smallsetminus e$ is disconnected.

Let $v\in V(G)$; we denote by $E_v(G)\subset E(G)$, respectively by $H_v(G)\subset H(G)$, the set of edges,
 resp. of half-edges,
adjacent to $v$.

 In some cases we will need to consider graphs endowed with legs,
then we will explicitly speak about {\it graphs with legs}.  A leg of a graph $G$ is a one-dimensional open simplex having exactly one endpoint $v\in V(G)$.  
We denote by $L(G)$ the set of  legs of $G$, and by $L_v(G)$ the set of legs having $v$ as endpoint.

The {\it valency},   $\val (v)$, of a vertex $v\in V(G)$ is defined as follows
\begin{equation}
\val (v):=|H_v(G)|+|L_v(G)|.
\end{equation}

Let now $X$ be a curve (having at most nodes as singularities), and let $G_X$ be its 
 so-called dual graph. So, 
the vertices of $G_X$ correspond to the irreducible components of $X$, and we write
  $X=\cup_{v\in V(G_X)}C_v$ with $C_v$ irreducible curve.
The edges
of $G_X$ correspond to the nodes of $X$, 
and  we   denote the set of nodes of $X$ by $\sing=\{N_e,\  e\in E(G_X)\}$.
The endpoints of the edge $e$ correspond to the  components of $X$ glued at the node $N_e$.
Finally, the set of half-edges $H(G_X)$ is identified with the set of points
of the normalization of $X$ lying over    the nodes, so that a pair
$\{h, \ov{h}\}\subset H(G_X)$ corresponding to the edge $e\in E(G_X)$
is identified with a pair of points $p_h, p_{\ov{h}}$ on the normalization of $X$
in such a way that, denoting by $v,\ov{v}$ the endpoints of $e$,
with $h$ adjacent to $v$ and $\ov{h}$ adjacent to $\ov{v}$,
we have that $p_h$ lies on the normalization of $C_v$ and  $p_{\ov{h}}$
on the normalization of  $C_{\ov{v}}$.
This yields a handy  description of $X$:
\begin{equation}
\label{decompX}
X=\frac{\sqcup_{v\in V(G_X)}C_v^{\nu}}{\{ p_h=p_{\ov{h}},\  \forall h\in H(G_X)\}}
\end{equation}
where 
$C_v^{\nu}$ denotes the normalization of $C_v$.

Next,
let $(X; x_1,\ldots, x_b)$ be a   {\it pointed} curve, i.e. $X$ is a curve and $x_1,\ldots, x_b$
are   nonsingular points of $X$. To  $(X; x_1,\ldots, x_b)$
  we associate a graph with legs, 
  written
  $$
  G_{(X; x_1,\ldots, x_b)},
  $$
  by adding to the dual graph  $G_X$ described above
one leg $\ell_i$ for each marked point $x_i$, so that the endpoint of $\ell_i$
is the vertex  $v$ such that  $x_i\in C_v$. 
 
 \
  
A  {\it weighted graph} is a pair $(G,w)$ where  $G$ is a graph (possibly with legs) and $w$ a weight function $w:V(G)\to \Z_{\geq 0}$.
 The genus of a weighted graph is  
 $$
 g_{(G,w)}:=b_1(G)+\sum_{v\in V(G)}w(v).
 $$
 
A {\it tree} is a connected graph of genus zero (hence weights equal zero).

A weighted graph $(G,w)$  with legs   is {\it stable} (respectively {\it semistable}),
 if for every vertex $v$ we have
 $$w(v)+\val(v)\geq 3 \  \  \  \  {\text{(resp.}} \geq  2{\text{).}}$$

 \begin{defi}
\label{stabgr}
Let $(G,w)$ be a weighted graph of genus at least $2$. Its {\it stabilization}   is the stable graph 
obtained
obtained by removing from $(G,w)$  all leaves $(v,e)$ such that $w(v)=0$ and all
2-valent vertices of weight zero (see below). 
We say that two graphs are {\it (stably) equivalent} if they have the same stabilization.
\end{defi}
The stabilization does not change the genus. 

As in the previos definition, we shall often speak about graphs obtained by ``removing"   a 2-valent vertex, $v$,
from a given graph, $G$. By this we mean that after removing $v$, the topological space
of the so-obtained graph  is the same as that of $G$, but the sets of  vertices and edges are different.
The operation opposite  to removing a 2-valent vertex is that of ``inserting" a vertex (necessarily 2-valent) in 
the interior of an edge.

 A {\it refinement} of  a  weighted graph $(G,w)$ is a weighted graph  
   obtained by inserting some weight zero vertices in   the interior of some edges of $G$.

 Let now $X$ be a curve as before.
The  {\it (weighted) dual graph} of $X$ is the   weighted graph
 $(G_X, w_X)$, with $G_X$ as defined above, and for $v\in V(G_X)$
 the value $w_X(v)$ is equal to the genus of the normalization of $C_v$.
 
 It is easy to see that the genus of $X$ is equal to the genus of $(G_X, w_X)$. 
 
 The (weighted) dual graph of a pointed curve $(X; x_1,\ldots, x_b)$ is the graph with legs $(G_{(X; x_1,\ldots, x_b)}, w_X)$.

 \begin{remark}
 A pointed curve $(X; x_1,\ldots, x_b)$ is {\it stable}, or {\it semistable},   if and only if so is $(G_{(X; x_1,\ldots, x_b)}, w_X)$.

A curve $X$ is {\it rational} (i.e. it has genus zero) if and only if $(G_X,w_X)$ is a tree.
\end{remark}

 \begin{remark}
 \label{grstab}
Let $X$ be a curve of genus $\geq 2$ and $(G_X,w_X)$ its dual graph. 
There exists a unique stable curve $X^s$ of genus $g$ 
with a surjective map $\sigma: X\to X^s$,  such that $\sigma$ is birational away from some smooth rational components that get contracted
to a point.   $X^s$  is called the {\it stabilization} of $X$.
The dual graph of  $X^s$ is the stabilization of $(G_X,w_X)$; see Definition~\ref{stabgr}.
\end{remark}

For a stable  graph $(G, w)$ of genus $g$, we denote by
$\Ma(G, w)\subset \Mgb$ the locus of curves whose dual graph is $(G, w)$,
and we refer to it as a {\it combinatorial locus} of $\Mgb$
(the superscript ``alg" stands for algebraic, versus tropical,  see Subsection~\ref{tropsec}). Of course, we have
\begin{equation}
\label{algpart}
\Mgb=\bigsqcup_{(G, w)   {\text{ stable, genus }   g}}\Ma(G, w).
\end{equation}

 \subsection{Admissible coverings}
 \label{acssec}
 Details about this subsection may be found in   \cite{HM}, \cite{HMo} and \cite{gac}.
Let $\Mgb$ be the moduli space of stable curves of genus $g\geq 2$ and $M_g\subset \Mgb$
its open subset parametrizing smooth curves.
We denote by $ \Mgrdb$ the closure in  $\Mgb$ of the locus, $\Mgrd$, of smooth curves 
admitting a divisor of degree $d$ and rank $r$;
  in symbols:
\begin{equation}
\label{Mgrd}
\Mgrd:=\{[X]\in M_g: W^r_d(X)\neq \emptyset\}%\subset \Mgrdb\subset\Mgb 
\end{equation}
where $W^r_d(X)$ is the set of linear equivalence classes of divisors $D$ on $X$ such that $h^0(X,D)\geq r+1$.

The case of hyperelliptic curves, $r=1$ and $d=2$, has traditionally a simpler notation:
one denotes by $H_g\subset M_g$ the locus of hyperelliptic curves and by $\overline{H_g}$
its closure in $\Mgb$.
So, $\overline{H_g}=\ov{M^1_{g,2}}$.

\begin{defi}
\label{dgcdef}
Let $X$ be a connected  curve of genus $g\geq 2$.

If  $X$ is stable, then  $X$ is {\it hyperelliptic} if $[X]\in \overline{H_g}$; more generally
$X$ is {\it $(d,r)$-gonal}, respectively  {\it $d$-gonal}, if $[X]\in  \Mgrdb$, resp. 
if $[X]\in \ov{M^1_{d,g}}$.

If  $X$ is arbitrary, we say  $X$
 is hyperelliptic, $(d,r)$-gonal, or $d$-gonal if so is its stabilization.
 
 A  connected  curve of genus $g\leq 1$ is $d$-gonal for all $d\geq 2$. 
 \end{defi}

 We recall  the  definition of admissible covering, due to J. Harris and D. Mumford
 \cite[Sect. 4]{HM}, and introduce some useful generalizations.
 
\begin{defi}
\label{acdef}
Let  $Y$ be a connected  nodal  curve of genus zero, and $y_1,\ldots, y_b$ be   nonsingular points of $Y$; let 
$X$ be a connected nodal curve.
\begin{enumerate}[{\bf(A)}]
\item
\label{ac3}
A  {\it  covering}  (of $Y$)  is a regular map $\alpha:X\to Y$   such that the following conditions hold:
\begin{enumerate}
\item
$\alpha^{-1}(Y_{\rm sing})=\sing.$
\item
\label{ac3b}
$\alpha$ is unramified away from $\sing$ and away from
$y_1,\ldots, y_b$.
\item
\label{ac3c}
 $\alpha$  has  simple ramification  (i.e. a single point with ramification index  equals 2) 
 over   $y_1, \ldots, y_b$.
\item
\label{ac33}
For every $N\in \sing$ the ramification indices of $\alpha$ at the two branches of $N$ coincide.
\end{enumerate}
\item
\label{ac1}
A covering is called {\it semi-admissible} (resp. {\it admissible})
if the pointed curve 
$(Y; y_1,\ldots, y_b)$
is semistable (resp.   stable),
 i.e. 
for every irreducible component $D$ of $Y$ we have
\begin{equation}
\label{eqst}
|D\cap \ov{Y\smallsetminus D}|+|D\cap \{y_1,\ldots, y_b\}|\geq 2  \  \  \  \  (\text{resp.}  \geq 3).
\end{equation}
\end{enumerate}
\end{defi}
We shall write $\alpha:X\to (Y; y_1,\ldots, y_b)$ for a  covering as above,
and sometimes  just  $\alpha:X\to Y$. 
%\begin{remark}\label{plain}
In fact the definition of a  covering (without its being semi-admissible) does not need the points $y_1,\ldots, y_b$, as conditions \eqref{ac3b}  and \eqref{ac3c} may be replaced by imposing that $\alpha$ has ordinary ramification away from $\sing$.
%\end{remark}
The following are simple consequences of the definition.
\begin{remark}
\label{acrk} Let $\alpha:X\to Y$ be a   covering.
\begin{enumerate}[(A)]
\item
\label{acdeg}
There exists an integer $d$ such that for every irreducible component 
$D\subset Y$  
the degree of $\alpha_{|D}:\alpha^{-1}(D)\to D$ is $d$. We say that $d$ is the degree of $\alpha$.
\item
\label{acnoloop}
Every irreducible component of $X$ is nonsingular. 
\item
\label{acdeg2}
If $\alpha$ is admissible of degree 2, then $X$ is semistable.
\end{enumerate}
\end{remark}
In \cite{HM} the authors construct the moduli space $\ov{H_{d,b}}$ for admissible coverings, as a projective irreducible variety compactifiying the  Hurwitz scheme
(parametrizing admissible coverings having smooth range and target), and show that it has a natural morphism
\begin{equation}
\label{acmap}
\ov{H_{d,b}}\la \Mgb;\quad \quad [\alpha:X\to Y]\mapsto [X^s]
\end{equation}
where $X^s$ is the stabilization of $X$ and $g$ is its genus, so that $b=2d+2g-2$.
For example, if $d=2$ we have	
$
\ov{H_{2,2g+2}}\la \Mgb
$.

Moreover, the image of $\ov{H_{2,2g+2}}$ coincides with the locus  of  hyperelliptic stable curves,  $\ov{H_g}$, and
more generally
 the image of (\ref{acmap}) is the closure in $\Mgb$ of the locus of $d$-gonal curves, here denoted by $\ov{M_{g,d}^1}$.
 
The description of an explicit admissible covering  is in Example~\ref{acex}.

\subsection
{Divisors on  graphs}
\label{spgp}
For any graph $G$,  or any weighted graph $(G,w)$, its divisor group,
$\Div G$,  or $\Div(G,w)$, is defined as the free abelian group generated by the vertices of $G$.
We use the following notation for a  divisor $D$ on $(G,w)$
\begin{equation}
\label{notdiv}
D=\sum_{v\in V(G)}D(v)v
\end{equation}
where $D(v)\in \Z$.
For loopless and weightless graphs
 we use the divisor theory developed in \cite{BNRR}.
 If $G$   is a weighted graph with loops, we extend this theory as in   \cite{AC}. We begin with a definition. 
 
\begin{defi}
 \label{varG}
 Let $(G,w)$ be a weighted graph.

 We denote by  $G^0$ the loopless  graph obtained from $G$ by inserting a
vertex in the interior of every loop-edge,
and  by $(G^0,w^0)$ the  weighted graph such that
  $w^0$ extends $w$  and is  equal to zero on all vertices in $V(G^0)\smallsetminus V(G)$.

We denote by $G^w$ the  weightless, loopless graph obtained from $G^0$ by adding $w(v)$ loops based at $v$ for every $v\in V(G)$
and then inserting a
vertex in the interior of every loop-edge.
\end{defi}
 
Notice that $(G,w)$,   $(G^0,w^0)$ and $G^w$ have the same genus, and that $(G^0)^{w^0}=G^w$.
  
 For every $D\in \Div (G,w)$ its rank, $r_{(G,w)}(D)$,   is set equal to
$ r_{G^w}(D)$.
 Linearly equivalent divisors have the same rank.
A weighted graph $(G,w)$ of genus $g$ has a canonical divisor
 $K_{(G,w)}=\sum_{v\in V(G)}(2w(v)-2+\val(v))v$  of degree $2g-2$ such that the following Riemann-Roch formula holds
 \cite[Thm. 3.8]{AC}
$$
r_{(G,w)}(D)-r_{(G,w)}(K_{(G,w)}-D)=\deg D - g +1.
$$
\begin{remark}
\label{g01}
A consequence of the Riemann-Roch formula is the fact that if $g\leq 1$ then
for any divisor $D$ of degree $d\geq 0$  we have $r_{(G,w)}(D)=d-g$.
\end{remark}

For a weighted graph $(G,w)$ we denote by $ \Jac^d(G,w)$ the set of  linear equivalence classes  of degree-$d$ divisors, and set  
$$W^r_d(G,w):=\{[D]\in \Jac^d(G,w): r_{(G,w)}(D)\geq r\}.
$$
\begin{defi}
\label{ddef}
We say that a graph $(G,w)$ is {\it divisorially $d$-gonal}  if it admits a divisor of degree $d$ and rank at  least $1$, that is if $W^1_d(G,w)\neq \emptyset.$

A {\it hyperelliptic} graph is a divisorially $2$-gonal graph.
\end{defi}
%\newpage
\begin{example}
\label{binex}
Consider the following graph $G$ with $n\geq 2$.

\begin{figure}[h]
\begin{equation*}
\xymatrix@=.5pc{
 &\\
G&=  &&*{\bullet}
  \ar @{.} @/_.2pc/[rrrrr]_(0.01){v_1} \ar @{-} @/_1.5pc/[rrrrr]^{e_n} _(1){v_2}\ar@{-} @/^1.1pc/[rrrrr]^{e_2}\ar@{-} @/^2pc/[rrrrr]^{e_1}
&&&&& *{\bullet} &&&&
\\
 &\\
}
\end{equation*}
\end{figure}
$G$ is obviously hyperelliptic, as $r_G(v_1+v_2)= 1$.
Notice also that  
$$r_G(2v_1)=
\begin{cases}
1 & \text{ if $n=2$}\\
0 & \text{ if $n\geq 3$.}
\end{cases}
$$
Now fix on $G$   the  weight function  
 given by $w(v_1)=0$ and $w(v_2)=1$.
Here is the picture of $G^w$ (drawing weight-zero vertices by a ``$\circ$")

\begin{figure}[h]
\begin{equation*}
\xymatrix@=.5pc{
 &\\
G^w &=&&*{\circ}
  \ar @{.} @/_.2pc/[rrrrr]_(0.01){v_1} \ar @{-} @/_1.5pc/[rrrrr]^{e_n} _(1){\  v_2}\ar@{-} @/^1.1pc/[rrrrr]^{e_2}\ar@{-} @/^2pc/[rrrrr]^{e_1}
&&&&& *{\circ}  \ar @{-} @/_1.5pc/[rrrr] \ar @{-} @/^1.5pc/[rrrr] ^(1){u} &&&& *{\circ}&&&
\\
 &\\
}
\end{equation*}
\end{figure}
We have $r_{(G,w)}(v_1+v_2)=r_{(G,w)}(u+v_1)=r_{(G,w)}(u+v_2)=0$ for every $n\geq 2$.
On the other hand 
$$r_{(G,w)}(2v_1)=
\begin{cases}
1 & \text{ if $n= 2$}\\
0 & \text{ if $n\geq 3$}
\end{cases}
$$
and the same holds for $2v_2\sim 2u$.
Therefore  $(G,w)$  is hyperelliptic if and only if $n=2$ (in fact $n\leq 2$).
This example is   generalized in Corollary~\ref{binary}
\end{example}

\section{Admissible coverings and harmonic morphisms}
\label{mainsec}
 \subsection
{Harmonic morphisms of graphs}
\label{hyploop}
Let $\phi:G\to G'$ be a morphism; we denote by 
$\phi_V:V(G)\to V(G')$ the map induced by $\phi$ on the vertices.
$\phi$ is a  {\it homomorphism} if  $\phi(E(G))\subset E(G')$;
in  this  case we denote by $\phi_E:E(G)\to E(G')$ and by $\phi_H:H(G)\to H(G')$ the induced maps on edges and half-edges.
A morphism between weighted  graphs $(G,w)$ and $(G',w')$ is defined as a morphism of the underlying graphs, so we write either $G\to G'$ or $(G,w)\to (G',w')$ depending on the situation.

In the next definition, extending the one in \cite[Subsect. 2.1]{BN}, we introduce some extra structure on   morphisms between graphs.

\begin{defi}
\label{imdef}
Let $(G,w)$ and $(G',w')$ be loopless weighted graphs.
\begin{enumerate}[{\bf(A)}]
\item
An {\it indexed morphism} is a morphism  $\phi:(G,w)\to (G',w')$ enriched by the assignment,
for every $e\in E(G)$, of a non-negative integer, the  {\it index} of $\phi$ at $e$,
written $r_{\phi}(e)$, 
such that $r_{\phi}(e)=0$ if and only if $\phi(e)$ is a point.
An indexed morphism is {\it simple} if $r_{\phi}(e)\leq 1$ for every $e\in E(G)$.
Let $e=[h, \ov{h}]$ with $h, \ov{h}\in H(G)$; we set
$r_{\phi}(h)=r_{\phi}(\ov{h})=r_{\phi}(e)$.
\item
An indexed morphism is {\it pseudo-harmonic}
if for every $v\in V(G)$ there exists a number, $m_{\phi}(v)$,
such that for every $e'\in E_{\phi_V(v)}(G')$  (and, redundantly for convenience, every $h'\in H_{\phi_V(v)}(G')$)  we have
\begin{equation}
\label{imeq}
m_{\phi}(v)=\sum_{e\in E_v(G): \phi(e)=e'}r_{\phi}(e)=\sum_{h\in H_v(G): \phi(h)=h'}r_{\phi}(h). 
\end{equation}
\item
A pseudo-harmonic indexed morphism   is {\it non-degenerate} if $m_{\phi}(v)\geq 1$ for every $v\in V(G)$.

 \item
A pseudo-harmonic indexed  morphism is {\it harmonic} if 
 for every $v\in V(G)$ we have, writing  $v'=\phi(v)$,
 \begin{equation}
\label{RH}
\sum_{e\in E_v(G)}(r_{\phi}(e)-1) \leq 2\Bigr(m_{\phi}(v)-1+w(v)-m_{\phi}(v)w'(v')\Bigl).
\end{equation}

\end{enumerate}
\end{defi}
In the sequel, 
  all graph morphisms will be indexed morphisms, hence 
we shall usually omit the word ``indexed".

% \begin{remark}
For later use, let us observe that if $w'=\mo$ (i.e. $G'$ is weightless) condition \eqref{RH} simplifies as follows
\begin{equation}
\label{rest2}
\sum_{e\in E_v(G)}(r_{\phi}(e)-1) \leq 2(m_{\phi}(v)-1+w(v)). 
\end{equation}
% \end{remark}

\begin{remark}
Suppose that $\phi$ contracts a leaf-edge $e$ whose leaf-vertex $v$ has $w(v)=0$.
Then $r_{\phi}(e)=m_{\phi}(v)=0$ and condition \eqref{RH} is not satisfied on $v$. So, loosely speaking, a harmonic morphism contracts no weight-zero leaves. 
\end{remark}
\begin{remark}
\label{BNrk} {\it Relation with harmonic morphisms of \cite{BN}.}
For simple morphisms of weightless graphs our definition of harmonic morphism coincides with
the one  given   in \cite[Sec. 2.1]{BN}    for morphisms which contract 
no leaves. 
Indeed, it is clear that any simple pseudo-harmonic morphism is harmonic in the sense of \cite{BN}. Conversely, a harmonic morphism in the sense of \cite{BN} satisfies \eqref{rest2}
(with $w(v)=0$)
if and only if $\phi$ contracts no leaves; see the previous remark.
\end{remark}

 \begin{lemmadef}
Let $\phi:(G,w)\to (G',w')$ be a pseudo-harmonic    morphism.
Then for every $e'\in E(G')$ and $v'\in V(G')$ we can define the degree of $\phi$ as follows 
\begin{equation}
\label{deghar}
\deg \phi = \sum_{e\in E (G): \phi(e)=e'}r_{\phi}(e) =\sum_{v\in \phi^{-1}(v')}m_{\phi}(v) 
\end{equation}
(i.e. the above summations are independent of the choice of $e'$ and $v'$).
\end{lemmadef}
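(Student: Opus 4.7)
The plan is a standard double-counting via half-edges, bridged by the pseudo-harmonicity relation \eqref{imeq} and closed out by the connectivity of $G'$.

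First I would fix an edge $e' = [h', \bar{h}'] \in E(G')$ with endpoints $v_1' = \epsilon(h')$ and $v_2' = \epsilon(\bar{h}')$. Every edge $e = [h, \bar{h}] \in E(G)$ with $\phi(e) = e'$ satisfies $r_\phi(e) > 0$ (since $e'$ is an honest edge), so $\phi$ sends $\{h, \bar{h}\}$ bijectively onto $\{h', \bar{h}'\}$. This gives a bijection between edges of $G$ over $e'$ and half-edges of $G$ over $h'$, preserving the index, so that
\[
\sum_{\phi(e) = e'} r_\phi(e) \;=\; \sum_{\phi(h) = h'} r_\phi(h).
\]
Partitioning the half-edge sum by endpoint $v \in \phi_V^{-1}(v_1')$ and applying \eqref{imeq} on each $v$ gives
\[
\sum_{\phi(e) = e'} r_\phi(e) \;=\; \sum_{v \in \phi_V^{-1}(v_1')} \;\sum_{\substack{h \in H_v(G) \\ \phi(h) = h'}} r_\phi(h) \;=\; \sum_{v \in \phi_V^{-1}(v_1')} m_\phi(v).
\]
The symmetric calculation starting from $\bar{h}'$ produces the same edge sum expressed over $v_2'$.

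To conclude, set $V(v') := \sum_{v \in \phi_V^{-1}(v')} m_\phi(v)$ and $E(e') := \sum_{\phi(e) = e'} r_\phi(e)$. The two identities just derived show that $V(v_1') = E(e') = V(v_2')$ for every edge $e'$ of $G'$, so $V$ is constant on the endpoints of every edge of $G'$. Since $G'$ is connected, $V$ is constant on $V(G')$ (restricting to the image if $\phi$ fails to be surjective, which is the only case in which the claim is non-trivial for $v'$). The relation $E(e') = V(v_1')$ then forces $E$ to be independent of $e'$ as well, and both constants coincide; declaring this common value to be $\deg \phi$ establishes \eqref{deghar}.

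I do not foresee any real obstacle: the argument is routine half-edge book-keeping, and the only slightly delicate point is the opening bijection, which rests on the fact from Definition~\ref{imdef} that $r_\phi(e) = 0$ holds precisely when $\phi(e)$ is a vertex, so that every edge contributing to the first sum genuinely covers both half-edges of $e'$.
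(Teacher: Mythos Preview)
Your argument is correct and is precisely the standard half-edge double-counting that underlies \cite[Lm.~2.2 and Lm.~2.3]{BN}, which is all the paper invokes (its proof reads in full: ``Trivial extension of the proof of \cite[Lm.~2.2 and Lm.~2.3]{BN}''). One small remark: the parenthetical about restricting to the image when $\phi$ is not surjective is unnecessary and slightly misleading, since the identity $V(v_1')=E(e')=V(v_2')$ already holds for \emph{every} edge $e'$ of $G'$ (both sides vanish over an edge with no preimage), and connectivity of $G'$ then forces $V$ to be constant on all of $V(G')$.
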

\begin{proof}
Trivial extension of the proof of \cite[Lm. 2.2 and Lm. 2.3]{BN}.
\end{proof}
Let $\phi:(G,w)\to (G',w')$ be a pseudo-harmonic    morphism.
As in \cite[Subs. 2.3]{BN} we define a pull-back homomorphism 
$\phi^*:\Div(G',w')\to \Div(G,w)$  as follows: for every $v'\in V(G')$
\begin{equation}
\label{deg1}
\phi^*v'=\sum_{v\in \phi^{-1}(v')}m_{\phi}(v)v 
\end{equation}
 and we extend this linearly to all of $\Div(G',w')$. 
By \eqref{deghar} we have
 \begin{equation}
 \label{degdiv}
\deg D=\deg\phi\deg D'.
\end{equation}
 For a pseudo-harmonic morphism $\phi$   the {\it ramification divisor} $R_{\phi}$ is defined
 as follows.
\begin{equation}
\label{Ramdef}
R_{\phi}=\sum_{v\in V(G)} \Bigr(2\bigr(m_{\phi}(v)-1+w(v)-m_{\phi}(v)w'(v')\bigl)-\sum_{e\in E_v(G)}(r_{\phi}(e)-1)  \Bigl)v.
\end{equation}
The next result, generalizing the analog in \cite{BN}, implies that harmonic morphisms are characterized, among pseudo-harmonic morphisms, by   a Riemann-Hurwitz formula with   effective ramification divisor.
 \begin{prop}[Riemann-Hurwitz]
Let $\phi:(G,w)\to (G'w')$ be a pseudo-harmonic morphism of weighted graphs of genus $g$ and $g'$ respectively.
Then
\begin{equation}
\label{RHeq}
 K_{(G,w)}=\phi^*K_{(G',w')}+R_{\phi}.
\end{equation}
  $\phi$ is harmonic if and only if $R_{\phi}\geq 0$ (equivalently  $2g -2\geq \deg \phi(2g'-2)$).
\end{prop}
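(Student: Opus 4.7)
The plan is to verify \eqref{RHeq} vertex by vertex, extract the harmonic characterization directly from the definition of $R_\phi$, and obtain the numerical inequality by passing to degrees.

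First I would compute, for a fixed $v\in V(G)$ with image $v'=\phi_V(v)$, the coefficient of $v$ on the right-hand side. By \eqref{deg1}, the pull-back contributes $m_\phi(v)\cdot K_{(G',w')}(v')=m_\phi(v)\bigl(2w'(v')-2+\val(v')\bigr)$, and the ramification term is read off from \eqref{Ramdef}. Adding them, the pieces $\pm 2m_\phi(v)w'(v')$ cancel, as do the $\pm 2m_\phi(v)$ terms, so the sum reduces to
\begin{equation*}
m_\phi(v)\val(v')-2+2w(v)-\sum_{e\in E_v(G)}(r_\phi(e)-1).
\end{equation*}

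The key identity to establish is therefore
\begin{equation*}
m_\phi(v)\val(v')-\sum_{e\in E_v(G)}(r_\phi(e)-1)=\val(v).
\end{equation*}
Since $(G,w)$ is loopless, $\val(v)=|E_v(G)|$; write $E_v(G)=E_v^{\mathrm{e}}\sqcup E_v^{\mathrm{c}}$ according to whether $\phi$ sends the edge to an edge or contracts it (the latter having $r_\phi(e)=0$ by Definition \ref{imdef}(A)). Summing the pseudo-harmonic relation \eqref{imeq} over all $e'\in E_{v'}(G')$ yields
\begin{equation*}
m_\phi(v)\val(v')=\sum_{e'\in E_{v'}(G')}\ \sum_{e\in E_v(G):\phi(e)=e'}r_\phi(e)=\sum_{e\in E_v^{\mathrm{e}}}r_\phi(e),
\end{equation*}
while $\sum_{e\in E_v(G)}(-1)=-\val(v)$ and the contracted edges contribute $0$ to $\sum_{e} r_\phi(e)$. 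Combining these two facts gives exactly the identity above, so the right-hand side of \eqref{RHeq} at $v$ becomes $2w(v)-2+\val(v)=K_{(G,w)}(v)$, proving the formula.

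For the characterization, I would simply observe that $R_\phi(v)\geq 0$ is, by \eqref{Ramdef}, the literal rewriting of inequality \eqref{RH}; hence $R_\phi\geq 0$ as a divisor on $G$ is equivalent to $\phi$ being harmonic. Taking degrees in \eqref{RHeq} and using $\deg\phi^*K_{(G',w')}=\deg\phi\cdot(2g'-2)$ from \eqref{degdiv}, we obtain $\deg R_\phi=(2g-2)-\deg\phi\cdot(2g'-2)$, so the numerical inequality $2g-2\geq\deg\phi(2g'-2)$ follows from $R_\phi\geq 0$. I do not expect a real obstacle: the entire argument is bookkeeping, the only subtlety being the correct accounting of contracted edges, which are present in $\val(v)$ but absent from the pseudo-harmonic sum over $e'\in E_{v'}(G')$, and are reabsorbed through the $-1$ terms built into $R_\phi$.
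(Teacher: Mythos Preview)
Your proof is correct and follows essentially the same approach as the paper's: both arguments verify \eqref{RHeq} vertex by vertex via the key identity $\sum_{e\in E_v(G)} r_\phi(e)=m_\phi(v)\val(v')$, obtained by summing \eqref{imeq} over $e'\in E_{v'}(G')$, and both read off the harmonic characterization directly from the definition of $R_\phi$. Your treatment of contracted edges is slightly more explicit than the paper's, but the underlying computation is the same.
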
 
 \begin{proof}
 We write $K=K_{(G,w)}$ and $K'=K_{(G',w')}$.
For every $v\in V(G)$ we have $K(v)=2w(v)-2+\val(v)$ (notation in \eqref{notdiv}).
Hence, writing $v'=\phi(v)$, by \eqref{deg1} we have \begin{eqnarray*}
& K(v)-\phi^*K'(v)=2w(v)-2+\val(v)-m_{\phi}(v)\Bigr(2w(v')-2+\val(v')\Bigl)=\\
 \\
  &=2\Bigr(m_{\phi}(v)-1+w(v)-m_{\phi}(v)w(v')\Bigl)+\val(v)- m_{\phi}(v)\val(v').
 \end{eqnarray*}
 On the other hand by \eqref{deghar}
 $$
 \sum_{e\in E_v(G)}(r_{\phi}(e)-1)=\sum_{e\in E_v(G)}r_{\phi}(e)-\val(v)=m_{\phi}(v)\val(v')-\val(v).
 $$
 The two above identities imply 
 $K(v)-\phi^*K'(v)=R_{\phi}(v)$, so 
 \eqref{RHeq} is proved. 
 
 By definition, 
 $\phi$ is harmonic if and only if its ramification $R_{\phi}$ divisor is effective.  The   equivalence  in parenthesis follows from \eqref{degdiv}.
 \end{proof}
 \begin{remark}
\label{BNext} Other results proved in \cite{BN} for simple harmonic morphisms extend. In particular,  if $D'$ and $E'$ are linearly equivalent divisors on $(G',w')$,
their pull-backs $\phi^*D'$ and  $\phi^*E'$ under a pseudo-harmonic morphisms $\phi$  are linearly equivalent. 
\end{remark}

\subsection{The Hurwitz existence problem}
\label{HEP}
Our goal is to use harmonic morphisms   to characterize graphs that are dual graphs of $d$-gonal curves.
This brings up the  ``Hurwitz existence problem", about the existence of
  branched coverings of $\PP^1$  with prescribed  ramification profiles; to state it precisely we need some terminology.

Let $d\geq 1$ be an integer and let $\mP=\{P_1,\ldots, P_b\}$ be a set
of partitions of $d$,
so that we write  $P_i=\{r_i^1,\ldots,r_i^{n_i}\}$   with $r_i^j\in \Z_{\geq 1}$ and $\sum_{j=1}^{n_i} r_i^j =d$.

We say that $\mP$ is a {\it Hurwitz partition set}, or that $\mP$ is of Hurwitz type,  if the following condition holds.
There exist $b$ permutations
$\sigma_1,\ldots,\sigma_b\in S_d$  ($S_d$ the symmetric group)
whose product is equal to the identity,
such that 
$\sigma_i$ is the product of $n_i$ disjoint cycles of lengths
given by $P_i$,
and such that the subgroup
$<\sigma_1,\ldots,\sigma_b>$ is   transitive.

Notice that if $\mP$ is of Hurwitz type and we add to it the {\it trivial} partition $\{1,1,\ldots,1\}$,
  the resulting partition set is again of Hurwitz type.

\begin{remark}
\label{Riemrk}
By the Riemann existence theorem,   $\mP$ is a Hurwitz partition set  if and only if there exists a degree-$d$ connected covering $\alpha:C\to \PP^1$
 with $q_1,\ldots,q_b\in \PP^1$ such that $\alpha$ is unramified away from
 $q_1,\ldots,q_b$ and such that for all $i=1,\ldots, b$ we have
 $\alpha^*(q_i)=\sum _{j=1}^{n_i}r_i^jp_i^j$. The  genus $g$ of $C$ is determined  by the Riemann-Hurwitz formula:
\begin{equation}
\label{RiHu}
2g-2=-2d+\sum_{i=1}^b\sum_{j=1}^{n_i}(r_i^j-1),
\end{equation}
so that we shall also say that $\mP$ is a   {\it Hurwitz partition set
of genus $g$ and degree $d$}.
\end{remark}
 
\begin{remark}
\label{Hurk}
It is a fact that a   partition set $\mP$ satisfying \eqref{RiHu} is not necessarily of  Hurwitz type. Indeed, the so-called Hurwitz existence problem can be stated as follows:
characterize  Hurwitz partition  sets among all   $\mP$ satisfying \eqref{RiHu}.
This problem turns out to be very difficult and  is open in general. 
Easy cases in which every $\mP$  satisfying \eqref{RiHu} is of Hurwitz type are
$P_i=(2,1,\ldots,1)$ for every $i$, or
  $d\leq 3$, or  $b\leq 2$.
  
On the other hand  if $d=4$  the partition set $\mP=\{(3,1);(2,2);(2,2)\}$ 
  is not of  Hurwitz type, but the Riemann-Hurwitz formula \eqref{RiHu} holds with $g=0$; see \cite{PP} for this and other   results on the Hurwitz existence problem. \end{remark}

Let now $\phi:(G,w)\to T$ be a non-degenerate   pseudo-harmonic morphism, where $T$ is a tree; let $v\in V(G)$.
For any 
half-edge $h'\in H(T)$ in the image of some half-edge adjacent to $v$  we define, using \eqref{imeq},
a partition of $m_{\phi}(v)$:
$$
P_{h'}(\phi, v):=\{r_{\phi}(h),\  \forall h\in H_v(G): \  \phi(h)=h'\}.
$$
Now we   associate   to $v$ and $\phi$ the following  partition set:
\begin{equation}
\label{partv}
\mP(\phi,v)=\{P_{h'}(\phi,v),\  \forall h'\in \phi_H(H_v(G))\}.
\end{equation}
In the next definition we use the terminology of Remark \ref{Riemrk}.
 \begin{defi}
\label{dgdef}\begin{enumerate}[{\bf(A)}]
\item
Let $(G,w)$ be a loopless weighted graph.
 We say that $(G,w)$ is {\it  $d$-gonal} if
it admits   a non-degenerate,
degree-$d$ harmonic   morphism $\phi: (G,w) \to T$
where $T$ is a tree. 

If such a $\phi$ has the property that 
for every $v\in V(G)$ the partition set $\mP(\phi,v)$ is contained in a  Hurwitz partition set
of genus $w(v)$,
we say that $\phi$ is a  morphism  of {\it{Hurwitz type}}, and that
$(G,w)$ is a $d$-gonal graph of {\it{Hurwitz type}}.

\item Let $(G,w)$ be any graph. We say that it is  $d$-gonal, or of Hurwitz type,  if so is   $(G^0,w^0)$, with  $(G^0,w^0)$  as in Definition~\ref{varG}.
\end{enumerate}
\end{defi}
\begin{example}
\label{dleq3}
A harmonic morphism with indices at most equal to 2 is   of
Hurwitz type.   Hence
if  $d\leq 3$  a
$d$-gonal graph 
  is always of
Hurwitz type.  
 \end{example}
The following is one of the principal results of this paper, of which Theorem~\ref{mainstable} is a special case. Recall the terminology introduced in
Definition~\ref{stabgr}.
\begin{thm}
\label{main}
Let $(G,w)$ be a  $d$-gonal graph of Hurwitz type;
then there exists a $d$-gonal curve  
whose dual graph   is  $(G,w)$.

Conversely, let $X$ be a $d$-gonal curve; then its dual graph is equivalent to a $d$-gonal graph of Hurwitz type.
\end{thm}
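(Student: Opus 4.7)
The plan is to carry out both directions through the theory of admissible coverings, with the Riemann existence theorem serving as the dictionary between combinatorial harmonic indexed morphisms and branched coverings of nodal rational curves. By Definition~\ref{dgdef}(B) I may restrict to the loopless case, replacing $(G,w)$ by $(G^0,w^0)$; the passage back from $(G^0,w^0)$ to $(G,w)$ on the algebraic side amounts to contracting the weight-zero rational components corresponding to the $2$-valent vertices inserted on loops, which produces the desired self-nodes.

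For the forward direction, let $\phi:(G^0,w^0)\to T$ be a non-degenerate degree-$d$ harmonic indexed morphism of Hurwitz type. First I build a rational nodal curve $Y$ whose dual graph is $T$, namely one copy of $\PP^1$ for each vertex of $T$ glued along the edges, together with smooth marked points $y_1,\ldots,y_b$ to play the role of simple branch points. For each $v\in V(G^0)$ with image $v'=\phi_V(v)$, the Hurwitz-type hypothesis extends $\mP(\phi,v)$ to a Hurwitz partition set of genus $w(v)$ and degree $m_\phi(v)$ by simple partitions of type $(2,1,\ldots,1)$; Remark~\ref{Riemrk} then produces a smooth connected curve $C_v$ of genus $w(v)$ together with a covering $\alpha_v:C_v\to Y_{v'}\cong\PP^1$ whose ramification over each node of $Y_{v'}$ equals the partition $P_{h'}(\phi,v)$ and whose remaining ramification is simple over the corresponding $y_i$'s on $Y_{v'}$. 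The matching of indices on opposite branches of a node (condition~\ref{ac33} in Definition~\ref{acdef}) is automatic since $r_\phi(h)=r_\phi(\bar h)$, so gluing the $C_v$'s along the nodes encoded by $E(G^0)$ yields a nodal curve $\widetilde X$ carrying a covering $\alpha:\widetilde X\to(Y;y_1,\ldots,y_b)$; after stabilizing the target and performing the induced contractions on $\widetilde X$ this becomes an admissible covering in the sense of Definition~\ref{acdef}(B), so via~(\ref{acmap}) the stabilization $\widetilde X^s$ is $d$-gonal and, by construction, its dual graph is the stabilization of $(G^0,w^0)$, namely $(G,w)$.

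For the converse, if $X$ is $d$-gonal then $X^s$ lies in the image of~(\ref{acmap}), so there is an admissible covering $\alpha:\widetilde X\to(Y;y_1,\ldots,y_b)$ with $\widetilde X^s=X^s$. I associate to $\alpha$ an indexed morphism $\phi_\alpha:G_{\widetilde X}\to G_Y$ by sending each component of $\widetilde X$ to the component of $Y$ containing its image, and by letting $r_{\phi_\alpha}(e)$ be the common ramification index of $\alpha$ at the two branches of the corresponding node. Since $\deg\alpha_{|D}=d$ for every component $D\subset Y$ (Remark~\ref{acrk}(A)) and $m_{\phi_\alpha}(v)=\deg\alpha_{|C_v}$, the pseudo-harmonicity condition~\eqref{imeq} and the identity $\deg\phi_\alpha=d$ are immediate; the Riemann-Hurwitz formula applied to each smooth covering $\alpha_{|C_v}:C_v\to Y_{v'}$ rearranges into the harmonicity inequality~\eqref{RH}; and the existence of each $\alpha_{|C_v}$ witnesses, through Remark~\ref{Riemrk}, that $\mP(\phi_\alpha,v)$ is contained in a Hurwitz partition set of genus $w(v)$. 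Finally, $\widetilde X^s=X^s$ implies that $G_{\widetilde X}$ and $G_X$ share a common stabilization, so $(G_{\widetilde X},w_{\widetilde X})$ is the required equivalent $d$-gonal graph of Hurwitz type.

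The main obstacle is the forward direction, specifically the passage from purely local combinatorial data at each vertex to the simultaneous existence of honest branched coverings $\alpha_v$ with the required ramification profiles. The harmonicity inequality~\eqref{RH} alone merely records the numerical Riemann-Hurwitz identity at $v$, which by Remark~\ref{Hurk} is in general insufficient for the existence of such a covering; the Hurwitz-type hypothesis in Definition~\ref{dgdef}(A) is precisely what bridges this gap through Remark~\ref{Riemrk}. Secondary bookkeeping---ensuring $(Y;y_1,\ldots,y_b)$ can be arranged to be stable (possibly after introducing auxiliary canceling pairs of simple branch points) and tracking how loops of $G$ reappear as self-nodes under stabilization of $\widetilde X$---is routine but requires care with the operation $G\leftrightarrow G^0$.
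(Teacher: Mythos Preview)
Your converse direction is correct and matches the paper's argument (Lemma~\ref{acharm} and Corollary~\ref{pfpart2}).

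In the forward direction there is a genuine gap. A non-degenerate harmonic indexed \emph{morphism} $\phi:(G^0,w^0)\to T$ is allowed to contract edges: any $e\in E(G^0)$ with $r_\phi(e)=0$ is sent to a vertex of $T$. Such ``vertical'' edges really occur (see the figure following Lemma~\ref{lemma-hom}). Your construction silently assumes $\phi$ is a \emph{homomorphism}: you prescribe the ramification of $\alpha_v$ only over the nodes of $Y_{v'}$, which correspond to half-edges $h'\in H_{v'}(T)$, and you then glue the $C_v$'s along all edges of $G^0$. If $e=[h,\bar h]$ is vertical with endpoints $v_1,v_2$ over the same $u\in V(T)$, then $C_{v_1}$ and $C_{v_2}$ both map to the single component $D_u$, and the node $N_e\in\widetilde X$ you create by gluing lands at a \emph{smooth} point of $D_u$. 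This violates condition (a) of Definition~\ref{acdef}: $\alpha^{-1}(Y_{\rm sing})=\widetilde X_{\rm sing}$ fails, so $\alpha$ is not a covering at all, and no amount of stabilizing the target afterwards repairs this.

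The paper handles exactly this point via Lemma~\ref{lemma-hom}: for each vertical edge $e$ one attaches a new leaf $\widehat l_e$ to $T$ at $u$, inserts a weight-zero $2$-valent vertex $\hat v_e$ in $e$, and hangs enough weight-zero leaves on the remaining preimages of $u$ so that the modified map $\hat\phi:\hat G\to\hat T$ becomes a degree-$d$ harmonic \emph{homomorphism} of Hurwitz type, with $\hat G$ equivalent to $G$. Only then does Proposition~\ref{corr} (your vertex-by-vertex Riemann existence argument) apply to produce an honest covering. Finally one must still verify that the resulting covering is admissible when $(G,w)$ is stable, which the paper does by a case analysis on the vertices of $\hat T$; your phrase ``after stabilizing the target'' and the suggestion of ``auxiliary canceling pairs of simple branch points'' do not substitute for this step.
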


The proof of the first part of the theorem will be given in Subsection~\ref{proofmain}. 
The converse is   easier, and will be proved earlier, in Corollary~\ref{pfpart2}.

 \subsection{The dual graph-map   of  a covering.}
\label{dualXdef}
To prove Theorem~\ref{main} we
  shall associate to any covering 
$\alpha:X\to Y$
an indexed morphism of graphs, called  
the {\it dual graph-map} of $\alpha$, and denoted  
 by 
 $$\phi_{\alpha}:(G_X,w_X)\la G_Y.$$ 
As all components of $Y$ have genus zero, we omit the weight function for $Y$. 
 We sometimes write just  $G_X\to G_Y$  for simplicity.

   We use the notation of subsection~\ref{gcdef};  denote by   $$Y=\cup_{u\in V(G_Y)}D_u$$
the irreducible component decomposition of  $Y$. 
For any $v\in V(G_X)$ we have that $\alpha(C_v)$ is an irreducible component of $Y$,
hence there is a unique $u\in V(G_Y)$
such that $\alpha(C_v)=D_u$; this defines a map
 $ 
\phi_{\alpha,V}: V(G_X)\to V(G_Y) 
$ mapping $v$ to $u$.

Next,   $E(G_X)$ and $E(G_Y)$ are  identified with the set of nodes of $X$ and  $Y$.
To define
$ 
\phi_{\alpha,E}: E(G_X)\to E(G_Y) $
let $e\in E(G_X)$; then $e$ corresponds to the node $N_e$ of $X$.
The image $\alpha(N_e)$ is a node of $Y$, corresponding to a unique edge of $G_Y$, which we set to be the image of $e$ under $\phi_{\alpha,E}$.

It is trivial to check that the pair $(\phi_{\alpha,V},\phi_{\alpha,E})$ defines a morphism of graphs,  
$\phi_{\alpha}:G_X\to G_Y$.

Let us now define the indices of $\phi_{\alpha}$.
For any $e\in E(G)$ let $N_e$ be the corresponding node
of $X$.  
By Definition~\ref{acdef}, the restriction of $\alpha$ to each of the two branches of $N_e$ has the form
$u=x^r$ and $v=y^r$ where $x$ and $y$ are local coordinate at the branches of $N_e$, and $u$, $v$ are local coordinates at the branches of $\alpha(N_e)$ (which is a node of $Y$).
We set $r_{\phi_{\alpha}}(e)=r$.

If we need to keep track of the branch points of  
$\alpha:X\to (Y; y_1,\ldots, y_b)$, we 
endow the dual graph of $Y$ with $b$ legs, in the obvious way, and write
$ 
\phi_{\alpha}: G_X\to G_{(Y; y_1,\ldots, y_b)}.
$ 
\begin{example}
 \label{acex}
{\it Dual graph-map for the admissible covering of an irreducible hyperelliptic curve.}
Let $X\in \ov{H_g}$ be an irreducible singular hyperelliptic curve. Such curves are completely characterized; we here choose    $X$ 
irreducible with  $g$ nodes, so
  that its normalization is $\PP^1$.
Let us describe an  admissible covering $\alpha:Z\to Y$ which maps to $X$ under the map (\ref{acmap}).
As we noticed in Remark~\ref{acrk}, $Z$ cannot be equal to $X$.
In fact, $Z$ is the ``blow-up" of $X$ at its $g$ nodes, so that $Z=\cup_{i=0}^gC_i$ is the union
of $g+1$ copies of $\PP^1$, with one copy, $C_0$, corresponding to the normalization of $X$,
and the remaining copies corresponding to the ``exceptional" components.
Hence $|C_i\cap C_0|=2$ and $|C_i\cap C_j|=0$ for all $i,j\neq 0$.
Now, since $X$ is hyperelliptic,   its normalization $C_0$ has a
two-to-one map to $\PP^1$, written $\alpha_0:C_0\to D_0\cong \PP^1$, such that
$\alpha_0(p_i)=\alpha_0(q_i)=t_i\in D_0$ for every pair $p_i,q_i\in C_0$ of points
lying over the $i$-th node of $X$. Let $y_0,y_1\in D_0$ be the two branch points of $\alpha_0$.

We  assume that in $X$ the component $C_0$ is glued to  $C_i$ along the pair $p_i,q_i$.
For $i\geq 1$ we pick a two-to-one map $\alpha_i:C_i\to D_i\cong \PP^1$ such that the two points
of $C_i$ glued to $X$ have the same image, $s_i$, under $\alpha_i$.
Let $y_{2i}, y_{2i+1}\in D_i$ be the two branch points of $\alpha_i$.

We define $Y$ as the following nodal curve
$ 
Y:=\sqcup_{i=0}^gD_i/_{\{t_i=s_i,\  \forall i=1,\ldots, g\}}.
$ 
Now, $(Y;  y_{2i}, y_{2i+1},\  \forall i=0,\ldots g )$ is   stable, and it is clear that the $\alpha_i$
glue to an admissible covering $\alpha:Z\to Y$.
The dual graphs and graph-map are in the following picture, where $g=3$.
\begin{figure}[h]
\begin{equation*}
\label{figgm}
\xymatrix@=.5pc{
&&&&&&&&&&&&&&&&&&*{\circ}\\
 \\
&&G_X=&&*{\circ}\ar@{-}@(ul,dl)\ar@{-}@(ur,dr)^{}\ar@{-}@(lu,ru)&&&&&&&&G_Z=&& 
*{\circ}  \ar @{-} @/_.5pc/[rrr]\ar @{-} @/^.5pc/[rrr]^{} &&& *{\circ}\ar @{-} @/_.5pc/[rrr] _(1){}\ar@{-} @/^.5pc/[rrr]\ar@{-} @/_0.5pc/[uur]\ar@{-} @/^0.5pc/[ruu]
&&& *{\circ}&&&\\
&&& &&&&&&&&&\ar[dd]\\
 &&&&&&& &&&&&&&&&\\
 %&&&&&&& &&&&&&&&&&\\
 &&&&&&&&&&&&&&&&&&*{\circ}\ar@{-}[dl]\ar@{-}[dr]\ar@{.}[uuuuu]\\
 &&&&&&&&&&&&&&&&&&&&\\
 &&&&&&&&&&&&G_{(Y,y_1,\ldots,y_b)}=&& *{\circ}\ar@{-}[dl]\ar@{-}[dr] \ar@{.}[uuuuu]\ar@{-}[rrr]&&& *{\circ}\ar@{-}[dl]\ar@{-}[dr]\ar@{.}[uuuuu]\ar@{-}[rrr]\ar@{-}[uur]&&
 &*{\circ}\ar@{-}[dl]\ar@{-}[dr]\ar@{.}[uuuuu]\\
&&&&&&&&&&&&&&&&&&&&&&&&&&&\\
}
\end{equation*}
%\caption{Dual graph-map of Example~\ref{acex}}
\end{figure}
\end{example}

 \begin{lemma}
 \label{acharm}
 Let  
  $\alpha:X\to Y$ be a  covering and
$\phi_{\alpha}:(G_X,w_X)\to G_Y$ the dual graph-map defined above.
Then $\phi_{\alpha}$ is a
 harmonic homomorphism of Hurwitz type. 
 
 If $\deg \alpha =2$ and  $X$   has no separating nodes, then $\phi_{\alpha}$ is simple. 
\end{lemma}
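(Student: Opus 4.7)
The plan is to verify, in order, that $\phi_\alpha$ is a homomorphism, then pseudo-harmonic with $m_{\phi_\alpha}(v)=\deg(\alpha|_{C_v})$, then harmonic via Riemann--Hurwitz on each component, and finally of Hurwitz type via the Riemann existence theorem. The simplicity claim in the degree-$2$ case will be dispatched at the end by a short bridge argument in $G_Y$.

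First I would observe that by Remark~\ref{acrk}\eqref{acnoloop} every component of $X$ is smooth, so $G_X$ is loopless and Definition~\ref{imdef} applies. The defining conditions of a covering in Definition~\ref{acdef} force $\phi_\alpha$ to be a homomorphism (since $\alpha$ sends nodes to nodes) and give a well-defined positive index $r_{\phi_\alpha}(e)$, namely the common ramification index at the two branches of $N_e$. Setting $m_{\phi_\alpha}(v) := \deg(\alpha|_{C_v}\colon C_v \to D_{\phi(v)}) \ge 1$ makes non-degeneracy automatic. For pseudo-harmonicity, fix $v\in V(G_X)$ and a half-edge $h' \in H_{\phi_\alpha(v)}(G_Y)$ based at a node $M$ of $Y$; the points of $C_v$ lying over $M$ are exactly the $p_h$ with $h\in H_v(G_X)$ and $\phi_\alpha(h)=h'$, each occurring in the fibre $\alpha|_{C_v}^{-1}(M)$ with multiplicity $r_{\phi_\alpha}(h)$, and these multiplicities sum to $m_{\phi_\alpha}(v)$ by finiteness of $\alpha|_{C_v}$. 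This is precisely \eqref{imeq}.

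Harmonicity would then follow from the classical Riemann--Hurwitz formula applied to the degree-$m_{\phi_\alpha}(v)$ covering of smooth curves $\alpha|_{C_v}\colon C_v \to D_{\phi(v)}\cong \PP^1$: its ramification divisor $R_v$ has degree $2(w(v)-1+m_{\phi_\alpha}(v))$. On $C_v$, $R_v$ accounts for the contributions $r_{\phi_\alpha}(e)-1$ at the $p_h$ above nodes of $Y$, together with additional non-negative contributions from the simple ramification of $\alpha$ over the $y_i$ lying in $D_{\phi(v)}$. Dropping the latter yields the inequality \eqref{rest2}, equivalent to \eqref{RH} since $w'(\phi_\alpha(v))=0$. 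The Hurwitz-type property then follows from the Riemann existence theorem: the complete list of ramification profiles of the connected smooth degree-$m_{\phi_\alpha}(v)$ covering $\alpha|_{C_v}$ of $\PP^1$ is a Hurwitz partition set of genus $w(v)$, and $\mP(\phi_\alpha,v)$ is obtained from it by discarding the simple-transposition partitions that sit over the $y_i$.

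For the simplicity claim, I would argue by contradiction. Indices lie in $\{1,2\}$ (positive because $\phi_\alpha$ contracts no edge, bounded above by $\deg\alpha=2$ via \eqref{deghar}), so suppose $r_{\phi_\alpha}(e_0)=2$ for some $e_0 \in E(G_X)$. Then \eqref{deghar} forces $e_0$ to be the unique edge of $G_X$ mapping to $e':=\phi_\alpha(e_0)$, hence $\alpha^{-1}(M)=\{N_{e_0}\}$ for $M:=\alpha(N_{e_0})$. Since $Y$ has arithmetic genus zero, $G_Y$ is a tree; removing the node $M$ disconnects $Y$ into two non-empty connected subcurves $Y_1,Y_2$ meeting only at $M$, and pulling back by $\alpha$ writes $X\smallsetminus\{N_{e_0}\}$ as the disjoint union of the non-empty sets $\alpha^{-1}(Y_i)\smallsetminus\{N_{e_0}\}$, contradicting the absence of separating nodes. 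The main subtleties I anticipate are the half-edge bookkeeping in the pseudo-harmonicity step and the careful extraction of the inequality \eqref{rest2} from the Riemann--Hurwitz equality, which hinges on cleanly isolating the node-ramification from the smooth ramification above the $y_i$.
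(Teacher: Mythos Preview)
Your proof is correct and follows essentially the same approach as the paper's: both set $m_{\phi_\alpha}(v)=\deg(\alpha|_{C_v})$, verify pseudo-harmonicity from the fibre count, deduce harmonicity from Riemann--Hurwitz on $\alpha|_{C_v}$, invoke Remark~\ref{Riemrk} for the Hurwitz type, and for the degree-$2$ simplicity argue that an index-$2$ edge forces $\alpha^{-1}(\alpha(N_e))=\{N_e\}$ with $\alpha(N_e)$ separating in the genus-zero curve $Y$. The only cosmetic difference is that the paper also notes $G_Y$ is loopless at the outset (needed for Definition~\ref{imdef}), which you use implicitly when you later say $G_Y$ is a tree.
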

\begin{proof}
 It is clear that $G_Y$ has no loops. By Remark~\ref{acrk} (\ref{acnoloop}), every component $C_v$ of $X$ is nonsingular, hence $G_X$   has no loops.
  
Since $\alpha$ is a    covering, we have that   $\phi_{\alpha,V}$ and $\phi_{\alpha,E}$ are surjective,
and  $\phi_{\alpha}$ 
does not contract any edge of $G_X$; hence  $\phi_{\alpha}$ 
is a homomorphism. We shall abuse notation by writing $\phi_{\alpha}$ for    $\phi_{\alpha,V}$,
  $\phi_{\alpha,H}$ and $\phi_{\alpha,E}$.

Let now $v\in V(G_X)$ and $h'\in H_{\phi(v)}(G_Y)$,
so that $h'$ corresponds to a point in the image of $C_v$ via $\alpha$,
i.e. to a point in $D_{\phi(v)}\subset Y$. 
Consider the restriction of $\alpha$ to $C_v$:
$$
\alpha_{|C_v}:C_v\la D_{\phi(v)}.
$$
This is a finite morphism, and it is clear that for every $h'\in H_{\phi(v)}(G_Y)$
$$
\sum_{h\in H_v(G_X): \phi(h)=h'}r_{\phi_\alpha}(h)=\deg \alpha_{|C_v}.
$$
The right hand side above does not depend on $h'$, hence we may set
\begin{equation}
\label{pfdeg}
m_{\phi_{\alpha}}(v):=\deg \alpha_{|C_v}.
\end{equation}
Therefore $\phi_{\alpha}$ is pseudo-harmonic.
 To prove that $\phi_{\alpha}$ is harmonic we must prove that for every $v\in V(G_X)$ we have
\begin{equation}
\label{rest1}
\sum_{e\in E_v(G_X)}(r_{\phi_{\alpha}}(e)-1) \leq 2(m_{\phi_{\alpha}}(v)-1+w_X(v)). 
\end{equation}

Let $R\in \Div(C_v)$ be the ramification divisor of the map $\alpha_{|C_v}$
above. Then, by the Riemann-Hurwitz formula applied to $\alpha_{|C_v}$ we have, 
$$\deg R=2(m_{\phi_{\alpha}}(v)-1+w_X(v)).
$$
On the other hand 
the map $ \alpha_{|C_v}$ has ramification index $r_{\phi_{\alpha}}(h)$ at all $p_h\in H_v(G_X)$, hence
we must have
$$
R \  -\sum_{h\in H_v(G_X)}(r_{\phi_{\alpha}}(h)-1)p_h\geq 0
$$
from which (\ref{rest1})  follows. The fact that $\phi_{\alpha}$ is of Hurwitz type follows immediatly from Remark~\ref{Riemrk}.

Assume $\deg \alpha=2$ and $X$ free from separating nodes. We must prove the indices of $\phi$ are all equal to one, i.e.  that $\alpha_{C_v}$ does not ramify at the points $p_h$, for every $h\in H (G_X)$.
 By contradiction, suppose    $\alpha_{|C_v}$ is ramified at $p_h$; hence, as $\deg \alpha=2$,
 it is totally ramified at $p_h$, so that $\alpha^{-1}(\alpha(p_h))\cap C_v=p_h$.
 Since $\alpha$ is an admissible covering, we have exactly the same situation at the other branch of $N_e$, i.e.
 at $p _{\ov{h}}$. Therefore
 $$
 \alpha^{-1}(\alpha (N_e))=\{ N_e\}.
 $$
Now  $\alpha (N_e)$ is a node of $Y$, and hence it is a separating node. So, the above identity implies that $N_e$ is a separating node of $X$;  a contradiction.
   \end{proof}
   \begin{cor}
   \label{pfpart2}
The second part of Theorem~\ref{main} holds.
\end{cor}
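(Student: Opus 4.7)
The plan is to deduce the second part of Theorem~\ref{main} directly from Lemma~\ref{acharm} together with the Harris--Mumford description of the $d$-gonal locus. Suppose $X$ is a $d$-gonal curve of genus $g\geq 2$ (the case $g\leq 1$ requires a small separate argument, since by Definition~\ref{dgcdef} every such curve is $d$-gonal by convention, and one can produce a non-degenerate harmonic morphism of degree $d$ on the trivial tree by hand). By Definition~\ref{dgcdef} we have $[X^s]\in \ov{M^1_{g,d}}$. Since $\ov{M^1_{g,d}}$ equals the image of the map \eqref{acmap}, there exists an admissible covering
$$\alpha: Z \la (Y;y_1,\ldots,y_b)$$
of degree $d$ such that the stabilization $Z^s$ is isomorphic to $X^s$.

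Now I would analyze the dual graph-map. Because $Y$ is a connected nodal curve of arithmetic genus zero all of whose components are $\PP^1$, the graph $G_Y$ is a tree. By Lemma~\ref{acharm}, the dual graph-map $\phi_\alpha:(G_Z,w_Z)\to G_Y$ is a harmonic morphism of Hurwitz type, and by Remark~\ref{acrk}(\ref{acnoloop}) the graph $G_Z$ is loopless. Non-degeneracy of $\phi_\alpha$ is immediate from the formula $m_{\phi_\alpha}(v)=\deg \alpha_{|C_v}\geq 1$ established in \eqref{pfdeg}, valid because $\alpha$ sends each irreducible component of $Z$ surjectively onto a component of $Y$. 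Moreover, summing over a fiber and using \eqref{deghar} gives $\deg\phi_\alpha = \sum_{v\in\phi_\alpha^{-1}(v')} m_{\phi_\alpha}(v) = \deg \alpha = d$. Thus $(G_Z,w_Z)$ satisfies all the requirements of Definition~\ref{dgdef}(A) and is a $d$-gonal graph of Hurwitz type.

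It remains to match this with the dual graph of $X$. By Remark~\ref{grstab}, the stabilization of the weighted graph $(G_Z,w_Z)$ is the dual graph of $Z^s$, and similarly the stabilization of $(G_X,w_X)$ is the dual graph of $X^s$. Since $Z^s\cong X^s$, these two stabilizations coincide, so $(G_X,w_X)$ and $(G_Z,w_Z)$ are equivalent in the sense of Definition~\ref{stabgr}. This concludes the argument: $(G_X,w_X)$ is equivalent to the $d$-gonal graph of Hurwitz type $(G_Z,w_Z)$.

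The argument is essentially a bookkeeping exercise once one has Lemma~\ref{acharm} and the surjectivity of \eqref{acmap} onto $\ov{M^1_{g,d}}$; there is no real obstacle. The one conceptual point worth highlighting is that the passage from $X$ to $Z$ typically changes the combinatorial type of the graph (for instance a loop of $G_X$ coming from an irreducible component of $X$ with a self-node gets blown up by $\alpha$ into a bigon in $G_Z$, as illustrated by Example~\ref{acex}), and this is precisely why the statement of the theorem only claims equivalence of $(G_X,w_X)$ to a $d$-gonal graph of Hurwitz type, rather than equality.
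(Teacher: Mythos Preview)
Your argument is correct and follows the same route as the paper: use the Harris--Mumford surjection \eqref{acmap} to produce an admissible covering $Z\to Y$ with $Z^s\cong X^s$, apply Lemma~\ref{acharm} to its dual graph-map, and conclude equivalence of dual graphs via the common stabilization. You are in fact more careful than the paper in explicitly checking non-degeneracy and the degree of $\phi_\alpha$ (which Lemma~\ref{acharm} does not state outright), and in flagging the degenerate case $g\le 1$; the paper's proof leaves these implicit.
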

\begin{proof}
Let $X$ be a $d$-gonal curve; we must prove that 
the  dual graph of $X$ is   equivalent to a $d$-gonal graph of Hurwitz type.
By hypothesis
 there exists an admissible covering $\hX\to Y$ of degree $d$ such that the stabilization of $\hX$ is the same as the stabilization of $X$; see the end of subsection~\ref{acssec}. 
Therefore the dual graph of $\hX$ is equivalent to the dual graph of $X$.
 By Lemma~\ref{acharm}   the dual graph of $\hX$ is  of Hurwitz type, hence we are done.
\end{proof}
The proof of the first part of Theorem~\ref{main}  will be based on the next Proposition, which is 
 a converse to Lemma~\ref{acharm}.

 \begin{prop}
 \label{corr} Let $(G,w)$ be a weighted  
 graph of genus $\geq 2$  and let $T$ be a tree.
Let $\phi:(G,w)\to T$ be a     harmonic homomorphism of Hurwitz type. Then there exists a   covering $\alpha:X\to Y$  whose dual graph map is $\phi$.\end{prop}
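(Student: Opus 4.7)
The plan is to invert the dual graph-map construction of Subsection~\ref{dualXdef}: build a nodal curve $Y$ modelled on the tree $T$, construct a vertex-covering $\alpha_v:C_v\to D_{\phi(v)}$ for each $v\in V(G)$ using the Hurwitz-type hypothesis, and then glue the $C_v$'s at pairs of points over the nodes of $Y$ to produce $\alpha:X\to Y$.

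First, I assemble $Y$: for each $u\in V(T)$ take a copy $D_u\cong\PP^1$, and for each edge $e'=[h',\ov{h'}]\in E(T)$ joining $u$ to $\ov{u}$, pick distinct smooth points $t_{h'}\in D_u$ and $t_{\ov{h'}}\in D_{\ov{u}}$ (distinct also from other chosen points on the same component) and glue them into a node. Since $T$ is a tree, $Y$ is a connected nodal curve of arithmetic genus zero with dual graph $T$.

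Next, for each $v\in V(G)$ with $u=\phi(v)$, I produce a smooth connected curve $C_v$ of genus $w(v)$ and a degree-$m_{\phi}(v)$ covering $\alpha_v:C_v\to D_u$ with ramification profile $P_{h'}(\phi,v)$ over each $t_{h'}$ (for $h'\in\phi_H(H_v(G))$) and simple ramification at some additional points of $D_u$, chosen disjoint from the nodes. Starting from the Hurwitz partition set $\mathcal{Q}$ of genus $w(v)$ containing $\mP(\phi,v)$ supplied by the hypothesis, realise the extra partitions $Q_1,\dots,Q_m\in \mathcal{Q}\setminus\mP(\phi,v)$ by permutations $\rho_j\in S_{m_{\phi}(v)}$; each $\rho_j$ can be factored into exactly $m_{\phi}(v)-n_j$ transpositions (where $n_j$ is the number of cycles of $\rho_j$), and replacing $\rho_j$ by this factorisation preserves both the product-equals-identity relation and transitivity, hence Hurwitz type, without changing the Riemann--Hurwitz count. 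Now all partitions outside $\mP(\phi,v)$ are of the simple type $(2,1,\dots,1)$, and the Riemann existence theorem (cf.\ Remark~\ref{Riemrk}) delivers the desired $\alpha_v$ and $C_v$, the latter connected by transitivity.

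Finally, I form $X$ and $\alpha$ by gluing. For each edge $e=[h,\ov{h}]\in E(G)$ with $\phi(e)=e'=[h',\ov{h'}]$, the partition $P_{h'}(\phi,v)$ singles out in $\alpha_v^{-1}(t_{h'})$ a unique point $p_h$ of ramification index $r_{\phi}(h)$ indexed by $h$, and similarly $p_{\ov{h}}$; since $r_{\phi}(h)=r_{\phi}(\ov{h})=r_{\phi}(e)$, identifying $p_h$ with $p_{\ov{h}}$ yields a node of $X$ whose two branches have matching ramification indices. The resulting map $\alpha:X\to Y$ satisfies Definition~\ref{acdef}(A)(1) by the construction of the gluing, (4) by the matching of indices, and (2)--(3) because each $\alpha_v$ is \'etale outside the $t_{h'}$ and the added simple branch points; moreover $X$ is connected since $G$ is connected and each $C_v$ is. By construction the dual graph-map of $\alpha$ is $\phi$. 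The main obstacle — and the single place the Hurwitz hypothesis is used — is the vertex-wise construction of $\alpha_v$: harmonicity already gives the correct Riemann--Hurwitz genus budget via the effective ramification divisor \eqref{RH}, but without the combinatorial Hurwitz-type condition one cannot in general realise the prescribed local branching by an actual degree-$m_{\phi}(v)$ cover of $\PP^1$, as Remark~\ref{Hurk} illustrates.
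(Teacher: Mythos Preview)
Your proof is correct and follows essentially the same route as the paper's: build $Y$ from copies of $\PP^1$ indexed by $V(T)$, use the Hurwitz-type hypothesis together with the Riemann existence theorem to construct the vertex covers $\alpha_v:C_v\to D_{\phi(v)}$ with the prescribed ramification over the nodal points, and then glue along half-edge pairs to produce $\alpha:X\to Y$. The one place where you go beyond the paper is in justifying that the extra branch points of each $\alpha_v$ can be taken simple: the paper asserts this can ``easily'' be arranged, whereas you supply the explicit argument of factoring each surplus permutation $\rho_j$ into $m_{\phi}(v)-n_j$ transpositions, checking that this preserves the product relation, transitivity, and the Riemann--Hurwitz count. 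That is a genuine clarification of a point the paper leaves implicit, but otherwise the two arguments coincide.
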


\begin{proof}
As $\phi$ is harmonic,
for every $v\in V(G)$ condition \eqref{rest2} holds.

 We will abuse notation and write $\phi$ also for the maps 
$V(G)\to V(T)$, $H(G)\to H(T) $ and $E(G)\to E(T) $ induced by $\phi$ . 
We begin by constructing  two curves  $X$  and $Y$ whose dual graphs are $(G,w)$ and $T$.

For every $u\in V(T)$ we pick a pointed  curve $(D_u,Q_u)$
with  $D_u\cong \PP^1$, 
and such that the (distinct) points in $Q_u$ are indexed by the  half-edges    adjacent to $u$:
$$
Q_u
=\{q_h,\  \forall h\in H_u(T)\}. 
$$

We have an obvious 
identification
 $  \cup _{u\in V(T)}Q_u= H(T).$  
To glue the curves $D_u$ to a connected nodal curve $Y$ we proceed as in 
\ref{dualXdef}, getting 
$$
Y=\frac{\sqcup_{u\in V(T)}D_u}{\{q_h=q_{\ov{h}},\  \forall h\in H(T)\}}.
$$
By construction, $T$ is the dual graph of $Y$.
 
 Now to construct $X$ we begin by finding its irreducible components $C_v$ with their gluing point sets
 $P_v$. Pick $v\in V(G)$ and $u=\phi(v)\in V(T)$. By hypothesis, $m_{\phi}(v)\geq 1$;
we claim that there exists a morphism from a smooth curve $C_v$ of genus $w(v)$ to $D_u$
\begin{equation}
\label{av1}
\alpha_v:C_v\la D_u
\end{equation}
of degree equal to $m_{\phi}(v)$ such that for every $h'\in H_u(T)$ the pull-back of the divisor $q_{h'}$ has the form
$$
\alpha_v^*q_{h'}=\sum_{\phi_H(h)=h'}r_{\phi}(h)p_h
$$
for some points $\{p_h,\  h\in H(G)\}\subset C_v$; we set $P_v=\{p_h,\  h\in H(G)\}$.

Indeed, the degree of the ramification divisor of a degree-$m$
 morphism from a curve of genus $w(v)$ to $\PP^1$ of  is equal to $2(m-1+w(v))$. Therefore assumption (\ref{rest2})
guarantees that the ramification conditions we are imposing are compatible; now as $\phi$ is of Hurwitz type, the  Riemann
Existence theorem  yields that such an $\alpha_v$ exists; see Remark~\ref{Riemrk}. Observe that $\alpha_v$ may have other  ramification, in which case we  can easily impose that   any  extra  ramification  and branch  point 
lie   $C_v\smallsetminus P_v$,
 respectively in $D_u\smallsetminus Q_u$, and that they are all simple.

Now that we have the  pointed  curves $(C_v,P_v)$ for every $v\in V(G)$
such that  $C_v$ is a smooth curve of genus $w(v)$ 
we can define $X$:
$$
X:=\frac{\sqcup_{v\in V(G)}C_v}{\{p_h=p_{\ov{h}},\  \forall h\in H(G)\}},
$$
so, $(G,w)$ is the dual graph of $X$.
  
Let us prove that the  morphisms $\{\alpha_v,\  \forall v\in V(G)\}$ glue to a morphism $\alpha:X\to Y$.
It suffices to check that
for    every pair $(p_h,p_{\ov{h}})$    we have $\alpha_v(p_h)=\alpha_{\ov{v}}(p_{\ov h})$,
where   $p_h\in C_v$ and $p_{\ov{h}}\in C_{\ov{v}}$.
  We have $\alpha_v(p_h)=q_{\phi(h)}$ and  $\alpha_{\ov{v}}(p_{\ov{h}})=q_{\phi(\ov{h})}$.
Now, looking at the   involution of $H(T)$  (see subsection~\ref{gcdef}), we have
 $\phi (\ov{h})=\ov{(\phi (h))}$, and hence 
$\alpha:X\to Y$ is well defined. 
  
We now show that $\alpha$ is a covering. It is obvious that   $\alpha^{-1}(Y_{\rm sing})=\sing.$ Next, for every node $N_e$ of $X$, the ramification indices at the two branches, $p_h,p_{\ov{h}}$ where $[h,\ov{h}]=e$, are equal, as they are equal to
 $r_{\phi}(h)$ and $r_{\phi}(\ov{h})$.
As we have imposed that $\alpha_{v}$ has only ordinary ramification
points away from the nodes of $X$, 
  condition (\ref{ac3c})
  of Definition~\ref{acdef} is satisfied. 
Therefore   
  the   map 
  $ 
  \alpha: X\to  Y
  $ is a covering;   obviously $ \alpha$
  has $\phi$ as dual graph-map. 
\end{proof}

To deduce Theorem~\ref{main}  from the previous Proposition we will need to construct a suitable  homomorphism from a given morphism of Hurwitz type,
which is done in the next Lemma.

\begin{lemma}
\label{lemma-hom}
Let $\phi:(G,w)\to T$ be a   degree-$d$ morphism of Hurwitz type.
Then there exists a degree-$d$ homomorphism $\hphi:(\hG,\hw)\to \hT$ of Hurwitz type fitting in a commutative diagram
\begin{equation}\label{diagH}
\xymatrix{
\hG\ar[d]\ar[r]^{\hphi}   & \hT\ar[d] \\
G \ar[r]^{\phi} &T
} 
\end{equation}
whose vertical arrows are edge contractions, and such that  $(\hG,\hw)$ is equivalent to $(G,w)$.
\end{lemma}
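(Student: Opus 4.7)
The plan is to modify $\phi$ by blowing up each contracted edge into a short path whose middle vertex and attached leaves map to a freshly added leaf of $\hT$. Fix a contracted edge $e \in E(G)$ with endpoints $v_e, \bar v_e$ (so $\phi(v_e)=\phi(\bar v_e)=v'\in V(T)$). I would attach in $\hT$ a new leaf edge $\tilde e'$ at $v'$ ending in a fresh weight-zero leaf $\tilde v'$; subdivide $e$ in $\hG$ via a new weight-zero vertex $u_e$ splitting $e$ into $e_1,e_2$; and attach additional index-one leaf edges at every $x \in \phi^{-1}(v')$, namely $m_\phi(v_e)-1$ at $v_e$, $m_\phi(\bar v_e)-1$ at $\bar v_e$, and $m_\phi(x)$ at each $x \in \phi^{-1}(v')\setminus\{v_e,\bar v_e\}$, each leading to a new weight-zero leaf vertex. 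The map $\hphi$ sends $u_e$ and all new leaf vertices (for $e$) to $\tilde v'$, and $e_1,e_2$ together with all new leaf edges to $\tilde e'$, with index $1$. Performing this simultaneously for every contracted edge produces a homomorphism $\hphi:(\hG,\hw)\to\hT$; since the added material consists only of weight-zero leaves and 2-valent weight-zero subdivision vertices, stabilization strips it all away, giving $(\hG,\hw)\sim(G,w)$ and making \eqref{diagH} commute with the obvious edge contractions as vertical arrows.

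Pseudo-harmonicity, non-degeneracy, Hurwitz type and $\deg\hphi=d$ all follow directly from the choice of indices. Over $\tilde e'$ the fiber degree is $1+1+(m_\phi(v_e)-1)+(m_\phi(\bar v_e)-1)+\sum_{x\neq v_e,\bar v_e}m_\phi(x)=\sum_{x\in\phi^{-1}(v')}m_\phi(x)=d$; at each $x\in\phi^{-1}(v')$ the new half-edges mapping to the new half-edge at $v'$ carry indices summing to $m_\phi(x)$. For Hurwitz type, $\mP(\hphi,x)$ equals $\mP(\phi,x)$ enlarged by one trivial partition $\{1,\ldots,1\}$ of $m_\phi(x)$ per newly added half-edge at $v'=\phi(x)$. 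Such partitions correspond to adjoining unramified sheets in the Riemann existence theorem picture, preserving both the source genus and transitivity of the monodromy group, so any Hurwitz set of genus $w(x)$ containing $\mP(\phi,x)$ still contains $\mP(\hphi,x)$.

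The main obstacle, and the point I expect to require care, is verifying \eqref{RH} at an old vertex $v$. A direct bookkeeping gives $R_{\hphi}(v)=R_\phi(v)-k_v$, where $k_v$ denotes the number of edges of $G$ at $v$ contracted by $\phi$: each contracted edge previously contributed $-1$ to the LHS of \eqref{RH}, while after the modification every new edge at $v$ (the subdivided halves and the added leaves) has index $1$ and contributes $0$. Thus $\hphi$ is harmonic at $v$ if and only if $R_\phi(v)\geq k_v$, a condition not implied by $\phi$ harmonic alone. What rescues the argument is the Hurwitz hypothesis: Riemann--Hurwitz applied to any cover of $\PP^1$ of degree $m_\phi(v)$ and source genus $w(v)$ realizing a Hurwitz set containing $\mP(\phi,v)$ forces $\sum_{P\in\mP(\phi,v)}(m_\phi(v)-|P|)\leq 2w(v)+2m_\phi(v)-2$, and with pseudo-harmonicity at $v$ this rearranges to exactly $R_\phi(v)\geq k_v$. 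At $u_e$ and at the new leaf vertices every appearing index is $1$, so \eqref{RH} is immediate.
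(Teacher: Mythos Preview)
Your construction is identical to the paper's (subdivide each contracted edge, attach weight-zero leaves at every vertex in the fiber to restore the local degree, and add one new leaf to $T$ per contracted edge), and your verification is correct. In fact, your observation that harmonicity of $\hphi$ at the old vertices genuinely requires the Hurwitz hypothesis---because passing from $\phi$ to $\hphi$ replaces index-$0$ contributions by index-$1$ contributions and hence raises the left side of \eqref{RH}---is more careful than the paper's proof, which simply asserts that the restriction $\phi'$ of $\phi$ to the horizontal edges is harmonic; your Riemann--Hurwitz computation is exactly what is needed to justify that assertion.
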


\begin{proof}
The picture after the proof illustrates the forthcoming construction.
Since $(G^0,w^0)$  is equivalent to $(G,w)$ we can assume $G$ loopless.
Consider the set of ``vertical" edges of $\phi$:
 $$
\Ev_{\phi}(G):=\{e\in E(G):\  \phi(e)\in V(G')\}
$$
 and set    $\Eh_{\phi}(G):= E(G)\smallsetminus \Ev_{\phi}(G)$. Of course, if $\Ev_{\phi}(G)=\emptyset$ there is nothing to prove.
 So,
let  $e\in \Ev_{\phi}(G)$ 
and $v_1,v_2$ be its endpoints.
We set $u=\phi(v_1)=\phi(v_2)=\phi(e)$ and write
\begin{equation}
\label{verte}
\phi_V^{-1}(u)=\{v_1,v_2, \ldots, v_n\}
\end{equation}
with $n\geq 2$ and the $v_i$ distinct.
Set $m_i:=m_{\phi}(v_i)$ for $i=1,\ldots, n$. 
 
We   begin by constructing  $\hG$.
First,  we   insert a weight zero vertex  $\hv_e$ in the interior of $e$, and
  denote by $\he_1,\he_2$ the two edges adjacent to it.
Next,   we attach $m_1-1$ leaves at $v_1$, $m_2-1$ leaves at $v_2$, and
  $m_i$ leaves at $v_i$ for all $i\geq 3$; all these leaf-vertices are given weight zero.
We denote the $j$-th leaf-edge attached to $v_i$  by $l^{(i)}_{e,j^{(i)}}$
 and its leaf-vertex by $w^{(i)}_{e,j^{(i)}}$, with
 $j^{(i)}=1,\ldots, m_i-1$ if $i=1,2$ and 
$j^{(i)}=1,\ldots, m_i$ if $i\geq 3$.

We repeat this construction for every  $e\in \Ev_{\phi}(G)$, and we denote the so obtained graph by $\hG$.
We have identifications
$$
E(\hG)=   \Eh_{\phi}(G)   \sqcup\{\he_1,\he_2,\  \forall e\in \Ev_{\phi}(G)\}\sqcup  \{\l^{(i)}_{e,j^{(i)}}\  \  \forall e\in \Ev_{\phi}(G), \forall i, \forall j^{(i)}\}
$$
 and
$$
V(\hG)= V(G)  \sqcup\{\hv_e,\  \forall e\in \Ev_{\phi}(G)\}\sqcup \{w^{(i)}_{e,j^{(i)}}\  \  \forall e\in \Ev(_{\phi}G),
\forall i, \forall j^{(i)}\}.
$$
There is a contraction $\hG\to G$ given by contracting, for every $e\in \Ev_{\phi}(G)$, the edge $\he_1$ and all leaf edges $\l^{(i)}_{e,j^{(i)}}$. It is clear that $G$ and $\hG$ are equivalent.

Let us now construct $\hT$;
for every $e\in \Ev(G)$ we add to $T$ a leaf based at $u=\phi(e)$;
we denote by $\widehat{l}_e$,
and $\widehat{w}_e$ the 
  edge and  vertex 
of this leaf. We let $\hT$ be the tree   obtained after repeating this process
for every $e\in \Ev_{\phi}(G)$. 
There is a contraction $\hT\to T$ 
given by contracting  all leaf edges $\widehat{l}_e$.

Let 
 $G':=G-\Ev_{\phi}(G)$, so that $G'$ is also a subgraph of $\hG$.
Denote by $ \phi':G' \to T$ the restriction of $\phi$ to $G'$;
observe that $\phi'$ is a harmonic homomorphism.
To construct $\hphi:\hG\to  \hT$ 
 we extend
 $\phi'$   as follows.
For every $e\in \Ev_{\phi}(G)$ we set, with the above notations,
$$
\hphi(\he_1)=\hphi(\he_2)=\hphi(l^{(i)}_{e,j^{(i)}})=\widehat{l}_e
$$
and
 $$
\hphi(\hv_e)=\hphi(w^{(i)}_{e,j^{(i)}})=\widehat{w}_e 
$$
for every $i$ and $j^{(i)}$.
Finally, we define the indices of $\hphi$ 
\begin{displaymath}
r_{\hphi}(\he)=\left\{ \begin{array}{ll}
r_{\phi}(\he)  &\text{ if }   \he\in \Eh_{\phi}(G)\\
\  1 &\text{ otherwise.}\\
\end{array}\right.
\end{displaymath}
It is clear that $\hphi$  is a homomorphism  and that   diagram \eqref{diagH} is commutative.

Let us check  that $\hphi$   is pseudo-harmonic. 
Pick $e\in \Ev(G)$.
Consider a leaf vertex $w^{(i)}_{e,j^{(i)}}$ of $\hG$.
Then it is clear that condition (\ref{imeq}) holds  with $m_{\hphi}(w^{(i)}_{e,j^{(i)}})=1$.
Next, consider a vertex $\hv_e$. It is again clear that condition (\ref{imeq}) holds with $m_{\hphi}(\hv_e)=2$.
Finally, consider the vertices $v_1,\ldots, v_n$ introduced in (\ref{verte}).
Recall that $\hphi(v_i)=\phi(v_i)=u$ and condition (\ref{imeq}) holds for any edge in $E(T)\subset E(\hT)$ adjacent to $u$ with $m_{\hphi}(v_i)=m_i$. We need to check that the same holds 
for the leaf-edges 
  $\widehat{l}_e\in E(\hT)$.
For $v_1$ and any leaf  $\widehat{l}_e$ adjacent to $\hphi(v_1)$ we have 
$$
\sum_{\he\in E_{v_1}(\hG): \hphi (\he)=\widehat{l}_e}r_{\hphi}(\he)=\sum_{j^{(1)}=1}^{m_1-1} r_{\hphi}(l^{(1)}_{e,j^{(1)}})+r_{\hphi}(\he_1)=m_1-1+1=m_1,
$$
(as  $r_{\hphi}(l^{(1)}_{e,j^{(1)}})=r_{\hphi}(\he_1)=1$)  Similarly for $v_2$. Next,
for $v_i$ with  $i=3,\ldots, n$ we have
$$
\sum_{\he\in E_{v_i}(\hG): \hphi (e)=\widehat{l}_e}r_{\hphi}(\he)=\sum_{j^{(i)}=1}^{m_i} r_{\hphi}(l^{(i)}_{e,j^{(i)}}) =m_i.
$$
Since   $\phi'$   is    pseudo-harmonic
  there is nothing else to check; hence $\hphi$ is pseudo-harmonic.  Now,  to prove that $\hphi$ is harmonic we must check that  condition \eqref{rest2}
holds; 
since $\phi'$ is harmonic,
this follows immediatly  from the fact that
 the index of $\hphi$ at each of the  new edges is 1.
 
Finally, to prove that $\hphi$ is of Hurwitz type,  pick a vertex  of $\hG$; if this vertex
  is of type $\hv_e$ or $w_{e,j^{(i)}}^{(i)}$ then the associated partition set contains only the trivial partition,
  and hence it is obviously contained in some partition set of Hurwitz type.
The remaining case is that of a vertex $v$ of $G$.
Then  either $\mP(\phi,v)= \mP(\hphi,v)$ (if $v$ is not adjacent to $e\in \Ev$),
or $\mP(\hphi,v)$ is obtained by adding the trivial partition to $\mP(\phi,v)$;
in both cases, since by hypothesis $\mP(\phi,v)$ is contained in a partition set of Hurwitz type,
so is $\mP(\hphi,v)$.
\end{proof}
The following picture illustrates  $\hphi$ for a 3-gonal morphism $\phi$. All indices of $\phi$ are set  equal to 1, with the exception of  the vertical edge $e$ for which $r_{\phi}(e)=0$.
\begin{figure}[h]
\begin{equation*}
\xymatrix@=.5pc{
&&&&&&&&&&&&&&&&&*{\circ}\ar@{-}[dr]^{l_e}&&&&&&&&&&&&&&&\\
G=&  
*{\bullet} \ar @{-}@/_.5pc/[rrr]\ar @{-}@/^.5pc/[rrr]^(.9){v_1} &&& *{\bullet}\ar@{-}[dd]_{e}\ar@{-}@/_.5pc/[rrr] _(1){}\ar@{-} @/^.5pc/[rrr]&&&*{\bullet}&&&&&&&\hG=&
*{\bullet} \ar @{-}@/_.5pc/[rrr]\ar @{-}@/^.5pc/[rrr]^{} &&& *{\bullet}\ar@{-}@/_.5pc/[rrr] _(1){}\ar@{-} @/^.5pc/[rrr]&&&*{\bullet}\\
&&&&&&&&&&&&&&&&&*{\circ}\ar@{-}[ur]^(.1){\hv_e}\ar@{-}[dr]
&&&&&&&&&&&&&&&&&&&&&&&&&&&\\
&*{\bullet}\ar@{-}[rrr]_(1){v_2}&&&*{\bullet}\ar@{-}@/_.6pc/[rrruu]&&&&&&&&&&&*{\bullet}\ar@{-}[rrr]&&&*{\bullet}\ar@{-}@/_.6pc/[rrruu]&&&&&&&&&&&&&&\\
\ar[dd]_{\phi}&&&&&&&&&&&&&&\ar[dd]_{\hphi}&&&&&&&&&&&&\\
&&&&&&&&&&&&&&&& &&&&&&&&&&&&&&&\\
&&&&&&&&&&&&&&&&&*{\circ}\ar@{.}[uuuuuu]^(.1){\widehat{w}_e}&&&&&&&&&&&&&&\\
T=&*{\circ}\ar@{.}[uuuuuu]\ar@{-}[rrr]_(1){u}&&&*{\circ}\ar@{.}[uuuuuu]\ar@{-}[rrr]&&&*{\circ}\ar@{.}[uuuuuu]
&&&&&&&\hT=&*{\circ}\ar@{.}[uuuuuu]\ar@{-}[rrr]&&&*{\circ}\ar@{-}[lu]_{\widehat{l}_e}\ar@{-}[rrr]\ar@{.}[uuuuuu]&&&*{\circ}\ar@{.}[uuuuuu]
}
\end{equation*}
 
\end{figure}

\subsection{Proof of Theorem~\ref{main}}
\label{proofmain}
By Corollary~\ref{pfpart2} we need only  prove the first part of the Theorem.
We first  assume that $G$ is free from loops.

  By hypothesis we have  a non-degenerate,
degree-$d$,  harmonic morphism $\phi: G \to T$ of Hurwitz type,
where $T$ is a tree. We let  $\hphi:\hG \to \hT$ be a degree-$d$,  harmonic homomorphism associated to $\phi$ by Lemma~\ref{lemma-hom},
Now  $\hphi:\hG \to \hT$ satisfies all the assumptions of Proposition~\ref{corr}, 
hence there exists a   covering
 $\alpha:\hX\to \hY$
whose dual graph-map is   $\hphi:\hG \to \hT$. 
We denote by
  $y_1,\ldots, y_b\in Y$   the smooth branch points of $\alpha$. 

 Suppose now that $(G,w)$ is stable;
we claim  that $\alpha$
  is  admissible, i.e. 
  that  $(Y;y_1,\ldots, y_b)$ is  stable. We write $Y=\cup_{\hu\in V(\hT)}D_{\hu}$ as usual.
  For every branch point $y_i$ we attach a
  leg  to $\hT$,
  having  endpoint  $\hu\in V(\hT)$ such that
  $y_i\in D_{\hu}$. 
We must prove that the graph $\hT$ with these  $b$ legs   has no vertex of valency less than 3.
Pick a vertex of $\hT$. 
There are two cases, either  it is a 
vertex $u\in V(T)$ or it is a
leaf vertex $\widehat{w}_e$.

In the first case the preimage of $u$ via $\hphi$ is made of vertices of the original graph $G$.
So, pick  $v\in V(G)$ with   $\phi(v)=u$. The map $\alpha_v:C_v\to D_u$ has degree $m_{\phi}(v)$.
 If $m_{\phi}(v)=1$, then, of course, $C_v\cong \PP^1$ and
we have
$ 
\val(u)\geq \val(v),
$  and $\val(v)\geq 3$
as $G$ is stable; hence $\val(u)\geq 3$ as wanted. Notice  that this is the only place
where we use that $(G,w)$ is stable, the rest of the proof works for any  $d$-gonal graph.
 If $m_{\phi}(v)\geq 2$
then the map $\alpha_v$  has at least two branch points, 
each of which corresponds to  a leg  adjacent to $u$. If  $\alpha_v$ has more than two branch points, then $u$ has more than two legs adjacent to it, hence we are done;
  if $\alpha_v$ has exactly two branch points, then, by Riemann-Hurwitz,  $C_v\cong \PP^1$ and hence
 $C_v\subsetneq X$ as $X$ has genus $\geq 2$. Therefore 
  $C_v\cap \ov{X\smallsetminus C_v}\neq \emptyset$, and hence
  there is at least one edge
of $T$ adjacent to $u$, hence $\val(u)  \geq 3$.

Now consider a vertex of type $\widehat{w}_e$. By construction,
its preimage contains the vertex $\hv_e$, for which $m_{\hphi}(\hv_e)=2$;
  hence the corresponding component of $\hX$ maps two-to-one to the component
corresponding to $\widehat{w}_e$, and hence there are at least 2 legs attached to $\widehat{w}_e$ (corresponding to the two branch points).
There is also at least one edge because, as before, $\widehat{w}_e$ is not an isolated vertex of $\hT$.
So, $\val (\widehat{w}_e)\geq 3$. This proves that $\alpha$ is an admissible covering.

Now, $\hX$ is a curve whose dual graph is $(\hG,\hw)$.
Its stabilization   is a stable curve, $X$, whose  dual graph
  is clearly the original $(G,w)$. As we already mentioned, the fact that $X$ is $d$-gonal follows from \cite[Sect 4]{HM}, observing that $X$ is the image of the admissible covering 
$\alpha:\hX\to (Y;y_1,\ldots, y_b)$ under the morphism  (\ref{acmap}). 
This concludes the proof in case $(G,w)$ is stable and loopless.

Now let us drop the stability assumption on $(G,w)$.
If $\alpha$ is admissible, the previous  argument yields that the stabilization of $\hX$ is $d$-gonal. But the stabilization of $\hX$ is the same as the stabilization of $X$, hence we are done. 

Suppose $\alpha$ is not admissible; then there are two cases.
First case: $\hT$ has a vertex $u$ of valency $1$. By the previous part of the proof this can happen only if every vertex $v\in \phi_V^{-1}(u)$ has valency $1$ and   $\alpha$ induces an isomorphism $C_v\cong \PP^1$; such components of $\hX$ are called rational tails.  We now remove
the component $D_u$ from $\hY$, and 
 all the rational tails mapping to $D_u$  from   $\hX$.
 Observe that this  operation does not change the stabilization of $\hX$.
This corresponds to removing one leaf from $\hT$ and all its preimages (all leaves) under $\phi$.
We repeat this process until there are no 1-valent vertices left.  

Second case, $\hT$ has a vertex $u$ of valency $2$. Again by the previous part
 this   happens only if  every $v\in \phi_V^{-1}(u)$ has valency $ 2$ and   $\alpha$ induces an isomorphism $C_v\cong \PP^1$.  We collapse  the component $D_u$ of $\hY$ and   all the exceptional components  of $\hX$ mapping to $D_u$. Again, this  operation does not change the stabilization of $\hX$.
We repeat this process until there are no 2-valent vertices left. 

In this way
we arrive at two curves $X'$ and $(Y';y_1,\ldots, y_b)$,   the latter being stable,
endowed with a covering $\alpha':X'\to Y'$ induced by $\alpha$, by construction; indeed
the process did not touch the   branch points   $y_1,\ldots, y_b$, which are now the smooth branch points of $\alpha'$.
The covering $\alpha'$ is admissible, hence the stabilization of $X'$ is $d$-gonal (as before).
Since the stabilization of $X'$ is equal to the stabilization of $X$ we are done.
The loopless case is now proved.

\

We now   suppose that $G$ has some loop; let $(G^0,w^0)$ be its loopless model.
By Definition~\ref{dgdef}, $(G^0,w^0)$ is  $d$-gonal. The previous part yields that there exists a curve $X^0$ whose dual graph is $(G^0,w^0)$ and whose stabilization is $d$-gonal.
Since the stabilization of $X$ is equal to the stabilization of $X^0$ we are done.
Theorem~\ref{main} is proved.
\qed
 
 \begin{remark}
\label{2ex}
{\it Hyperelliptic and 2-gonal graphs.}
It is easy to construct  hyperelliptic (i.e. divisorially $2$-gonal) graphs that are not $2$-gonal;
for example the weightless graph $G$ in Example~\ref{binex}  for $n\geq 3$.

On the other hand every 2-gonal stable graph is hyperelliptic, by Theorem~\ref{main} and Proposition~\ref{mainconvh};
 see also Theorem~\ref{bridgethm}. More generally, using Remark~\ref{BNext} one can prove directly that if a graph admits a pseudo-harmonic morphism of degree $2$ to a tree, then it is hyperelliptic. We omit the details.
\end{remark}

\

\begin{example}
\label{pdex} {\it A  $3$-gonal graph which is not divisorially $3$-gonal.}

In the following picture we have a pseudo-harmonic morphism $\phi$ of degree $3$ from a weightless graph $G$ of genus $5$. There is one edge, joining $v_2$ and $v_3$, where the index is $2$, and all other edges have index $1$. 
The graph $G$ is easily seen to be  $3$-gonal, but not divisorially $3$-gonal, i.e.
$W^1_3(G)=\emptyset$. We omit the details.
\begin{figure}[h]
\begin{equation*}
\xymatrix@=.5pc{
 &\\
  &&*{\bullet}\ar @{-} @/_.2pc/[rrrrr]_(0.01){v_1} \ar @{-} @/_1.5pc/[rrrrr] _(1){\  v_2}\ar@{-} @/^1.5pc/[rrrrr]
&&&&& *{\bullet}  \ar @{-} @/_1.5pc/[rrrrr] \ar @{-} @/^1.5pc/[rrrrr]^{2} &&&&& *{\bullet}\ar @{-} @/_.2pc/[rrrrr]_(0.03){v_3} \ar @{-} @/_1.5pc/[rrrrr] _(1){v_4}\ar@{-} @/^1.5pc/[rrrrr]
&&&&& *{\bullet} &&&
\\
  &&&&&&&&&\\
&&&&&&&&  &\ar @{->}[dd] &\\
&&&&&&&&  \phi&&\\
  &&*{\bullet}\ar@{-}[rrrrr]&&&&& *{\bullet} \ar@{-}[rrrrr]&&&&& *{\bullet}\ar@{-}[rrrrr]&&&&& *{\bullet}
}
\end{equation*}
\end{figure}
\end{example}

\begin{example}\label{dex} {\it A divisorially $3$-gonal graph  which is not   $3$-gonal.}
 In the graph $G$ below, weightless of genus $5$, we have
 $$
 3v_1\sim 3v_2\sim-v_2+2v_0+2v_3\sim3v_3\sim v_0+v_2+v_3 \sim 3v_4
 $$
 so the graph is divisorially $3$-gonal.

\begin{figure}[h]
\begin{equation*}
\xymatrix@=.5pc{
 &&&&&&&&&*{\bullet}\ar @{-} @/^0.15pc/[rrd]^(0.03){v_0}_(0.5){e_3} \\
  &&*{\bullet}\ar @{-} @/_.2pc/[rrrrr]_(0.01){v_1} \ar @{-} @/_1.5pc/[rrrrr] _(0.9){v_2}\ar@{-} @/^1.5pc/[rrrrr]
&&&&& *{\bullet}  \ar @{-} @/_1.5pc/[rrrr]^{e_0} \ar @{-} @/^0.15pc/[rru]_(0.5){e_2} &&&& *{\bullet}\ar @{-} @/_.2pc/[rrrrr] \ar @{-} @/_1.5pc/[rrrrr] _(1){v_4}_(0.1){v_3}\ar@{-} @/^1.5pc/[rrrrr]
&&&&& *{\bullet} &&&
\\
}
\end{equation*}
\end{figure}

Let us show that $G$ does not admit a non-degenerate pseudo-harmonic morphism of degree $3$ to a tree. 
By contradiction, let $\phi:G\to T$ be such a morphism. Then the edges adjacent to $v_1$ cannot get contracted (if one  of them is contracted, all of them will be contracted, for $T$ has no loops; but if all of them get contracted then $m_{\phi}(v_1)=0$, which is not possible).
Therefore the three edges adjacent to $v_1$ are all mapped to the unique edge, $e'_1$, joining $\phi(v_1)$ with $\phi(v_2)$.
Similarly, the edges adjacent to $v_4$ are all mapped to the unique edge $e'_2$
joining $\phi(v_4)$ with $\phi(v_3)$. Therefore, as $\phi$ as degree $3$, all edges between $v_1$ and $v_2$,
and all edges between $v_3$ and $v_4$ have index 1, hence $m_{\phi}(v_1)=m_{\phi}(v_2)=m_{\phi}(v_3)=m_{\phi}(v_4)=3$. 

Now, if $\phi(v_2)=\phi(v_3)$ then one easily checks that $e_0$ is contracted and $e_2$, $e_3$ are  mapped to the same edge $e'_3$ of $T$, which is different from $e'_1$ and $e'_2$. Therefore  we
have $1\leq r_{\phi}(e_i)\leq 2$ for $i=1,2$.
But then by \eqref{imeq} we have
$$
m_{\phi}(v_2)=\sum_{e\in E_{v_2}(G): \phi(e)=e'_3}r_{\phi}(e)=r_{\phi}(e_2)\leq 2
$$
and this is a contradiction.

It remains to consider the case $\phi(v_2)\neq\phi(v_3)$,
let $e'_0=\phi(e_0)$. Then   $v_0$ is either mapped to $\phi(v_2)$ by contracting $e_2$, or to  $\phi(v_3)$  by contracting $e_3$�
(for otherwise $T$ would not be a tree). With no loss of generality, set
$\phi(v_2)=\phi(v_0)$ so that $r_{\phi}(e_2)=0$. Now, since $\phi(e_3)=\phi(e_0)=e'_0$  we have $r_{\phi}(e_0)\leq 2$. Hence
$$
m_{\phi}(v_2)=\sum_{e\in E_{v_2}(G): \phi(e)=e'_0}r_{\phi}(e)=r_{\phi}(e_0)\leq 2
$$
and this is a contradiction.
\end{example}
 \section{Higher gonality and applications to tropical curves}
 
\subsection{Basics on tropical curves}
\label{tropsec}
A (weighted) tropical curve is a weighted metric graph $\Gamma=(G,w,\ell)$
where $(G,w)$ is a weighted graph and $\ell:E(G)\to \R_{>0}$.
The divisor group $\Div(\Gamma)$ is, as usual, the free abelian group generated by the points of $\Gamma$ (viewed as a metric space).  
The weightless case has been carefully studied in \cite{GK}, for example;   the general case 
has been recently treated in
 \cite{AC}, to which we refer for the definition  of the  rank $r_{\Gamma}(D)$
of any $D\in \Div(\Gamma)$ and its basic properties. Here we just need the following facts.
 Given $\Gamma=(G,w,\ell)$ we introduce the tropical curve $\Gamma^w=(G^w,\mo,\ell^w)$
such that $G^w$ is as in Definition~\ref{varG}, the weight function is zero (hence denoted by $\mo$), and $\ell^w$ is the extension of $\ell$ such that $\ell^w(e)=1$ for every $e\in E(G^w)\smallsetminus E(G)$.
We have a natural commutative diagram 
\begin{equation}\label{diag2}
\xymatrix{
\Div(G,w)\ar@{^{(}->}[d]\ar@{^{(}->}[r]   & \Div(G^w)\ar@{^{(}->}[d] \\
\Div(\Gamma) \ar@{^{(}->}[r] & \Div(\Gamma^w)
} 
\end{equation}
the above injections will be viewed as inclusions in the sequel.
Then, for any $D\in \Div (\Gamma)$  we have, by \cite[Sect. 5]{AC}
 \begin{equation}
\label{ACRR}
 r_{\Gamma}(D)=r_{\Gamma^w}(D).
 \end{equation} So, the horizontal arrows of the above diagram preserve the rank. If the length functions on $\Gamma$ and $\Gamma^w$ are identically equal to $1$,   then, by 
 \cite[Thm 1.3]{luo}, also the vertical arrows of the diagram preserve the rank.

For a tropical curve $\Gamma$ we denote by $W^r_d(\Gamma)$ the set of equivalence classes of divisors of degree $d$ and rank at least $r$; we say that $\Gamma$ is $(d,r)$-gonal if
$W^r_d(\Gamma)\neq \emptyset$.

The moduli space of equivalence classes of tropical curves of genus $g$ is denoted by
$\Mgt$, and the locus in it of curves whose underlying weighted graph is $(G,w)$ is denoted by $\Mt(G,w)$. This gives a partition $$\Mgt=\sqcup \Mt(G,w)$$ indexed by all stable graphs $(G,w)$ of genus $g$.
\subsection{From   algebraic gonality to combinatorial   and tropical gonality}
\begin{thm}
\label{mainconv}
Let $X\in \ov{M^r_{g,d}}$
and let $(G,w)$ be the dual graph of $X$. Then
\begin{enumerate}[{(\bf A)}]
\item
there exists a refinement $(\hG,\widehat{w})$ 
of  $(G,w)$, such that $W^r_d(\hG,\widehat{w})\neq \emptyset$;
\item
there exists a tropical curve $\Gamma\in \Mt(G,w)$
such that $W^r_d(\Gamma)\neq \emptyset$.
\end{enumerate}
\end{thm}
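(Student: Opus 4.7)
The plan is to combine the stable reduction of a smoothing of $X$ with the generalization of Baker's specialization lemma (\cite[Lemma 2.8]{bakersp}) established in \cite{AC}, and then translate combinatorial rank into tropical rank.

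Step 1 (Regularization of a smoothing). Since $[X]\in \ov{M^r_{g,d}}$, there exist a DVR $R$ with residue field $\C$ and a proper flat family $\pi:\mathcal X\to \Spec R$ whose generic fiber $\mathcal X_K$ is a smooth curve carrying a divisor of degree $d$ and rank $r$, and whose closed fiber has $X$ as its stabilization. \'Etale-locally at each node $N_e$ of $X$, the family has the form $xy=t^{n_e}$ for some integer $n_e\geq 1$; resolving the resulting $A_{n_e-1}$ singularity inserts a chain of $n_e-1$ smooth rational components above $N_e$. Performing this at every node yields a regular model $\hat{\mathcal X}\to \Spec R$ whose closed fiber has dual graph $(\hG,\hw)$, namely the refinement of $(G,w)$ obtained by inserting $n_e-1$ weight-zero vertices in the interior of each $e\in E(G)$.

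Step 2 (Proof of (A)). Apply the weighted specialization lemma of \cite[Sect.~5]{AC} to the regular family $\hat{\mathcal X}\to\Spec R$: the chosen degree-$d$, rank-$r$ divisor on $\mathcal X_K$ specializes to a divisor $\hD\in\Div(\hG,\hw)$ with $\deg\hD=d$ and $r_{(\hG,\hw)}(\hD)\geq r$. Hence $[\hD]\in W^r_d(\hG,\hw)$, proving (A).

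Step 3 (Proof of (B)). Define the tropical curve $\Gamma=(G,w,\ell)\in\Mt(G,w)$ by $\ell(e):=n_e$ for every $e\in E(G)$. Then $\Gamma$ is canonically isometric, as a weighted metric graph, to $(\hG,\hw,\hat\ell)$ with $\hat\ell\equiv 1$, under the identification that places the $n_e-1$ subdividing vertices equispaced along the edge $e$. Via this isometry, $\hD$ is viewed as an integer divisor on $\Gamma$. Applying diagram \eqref{diag2} to the unit-length tropical curve $(\hG,\hw,\hat\ell)$, the horizontal arrows preserve rank by equation \eqref{ACRR} and the vertical arrows preserve rank by \cite[Thm~1.3]{luo}; hence $r_\Gamma(\hD)=r_{(\hG,\hw)}(\hD)\geq r$, so that $[\hD]\in W^r_d(\Gamma)$.

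The central technical point lies in Step 2: one must verify that the specialization statement of \cite{AC} does produce the inequality $r_{(\hG,\hw)}(\hD)\geq r$ with full account of the weights $\hw$, which record the geometric genera of the components of the regular central fiber. The remaining parts of Step 1 (the local structure at nodes) and Step 3 (the subdivision-invariance of tropical rank together with diagram \eqref{diag2}) are essentially routine once the specialization inequality is in hand.
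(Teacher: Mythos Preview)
Your proposal is correct and follows essentially the same route as the paper's own proof: desingularize a one-parameter smoothing, apply the weighted specialization lemma of \cite{AC} (cited there as \cite[Thm~4.9]{AC}) to obtain a divisor of rank at least $r$ on the refined dual graph, and then transfer from combinatorial to tropical rank via \cite[Thm~1.3]{luo} together with the identification of $(\hG,\hw,\hat\ell\equiv 1)$ with a curve in $\Mt(G,w)$. The one place where the paper is more explicit than your Step~1 is in producing a $K$-rational line bundle on the generic fiber: it first passes to a finite base change (possibly ramified over $b_0$) so that the relative $W^r_d$ acquires a section, and only then desingularizes---a step you have folded into your existential claim about the DVR family.
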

 %The proof is similar to the proof of  the ``Existence theorem" for weightless graphs,  in \cite{BNgraph}.
\begin{proof}
By hypothesis there exists a family of curves, $f:\X\to B$, with  $B$    smooth, connected, of dimension one,
 such that there is a point $b_0\in B$ over which the fiber of $f$  is isomorphic to $X$, and the fiber over any other point of $B$ is a smooth curve
 whose $W^r_d$ is not empty. 
 In the sequel we will work up to replacing $B$ by an open neighborhood of $b_0$, or by  an \'etale covering.
Therefore we will also  assume that $f$ has a section.

For every $b\in B^* = B\smallsetminus \{b_0\}$ we have $W^r_d(X_b)\neq 0$ 
($X_b$ is the fiber of $f$ over $b$).
Write $f^*:\X^*  \to B^*$
for the smooth family obtained by restricting $f$ to $\X\smallsetminus X_0$. Recall that
as $b$ varies in $B^*$ the   $W^r_d(X_b)$
form a family (\cite[Sect. 2]{ACo} or \cite[Ch. 21]{gac}),  i.e. there exists a morphism of schemes
\begin{equation}
\label{Wrdfam}
W^r_{d, f^*}\to B^*
\end{equation} 
whose fiber over $b$ is $W^r_d(X_b)$.

 Up to replacing $B$ by a finite covering possibly ramified only over $b_0$,
 we may assume that the base change of the morphism \eqref{Wrdfam} has a section.
  The base change of $f$ to this covering may be singular (or even non normal) over $b_0$, but will still have smooth fiber away from $b_0$.
Let $h:\ZZ\to B$ be the desingularization of the normalization of this base change of $f$.
Then the fiber of $h$ over $b_0$
is a semistable curve $Z_0$ whose stabilization is $X$; all remaining fibers are isomorphic
to the original fibers of $f$.
By construction, the morphism
\begin{equation}
\label{Wrdfamh}
W^r_{d,h^*}\to B^*
\end{equation} 
has a section, $\sigma$. By our initial  assumption $h:\ZZ\to B$ is   endowed with a section, hence, by \cite[Prop. 8.4]{BLR},
$\sigma$  corresponds to a
  line bundle $\L^*\in \Pic \ZZ^*$.
Since  $\ZZ$ is nonsingular  $\L^*$ extends to some line bundle $\L$ on $\ZZ$, and we have,  for every $b\in B$:
  $$r(Z_b,\L_{|Z_b})\geq r.$$

 Let $(\hG,\widehat{w})$ be the dual graph of $Z_0$.
We can apply
the weighted specialization Lemma   \cite[Thm 4.9]{AC}
  to $\ZZ\to B$ with respect to the line bundle $\L$. 
  This gives, viewing the multidegree $\mdeg \ \L_{|Z_0}$ as a divisor on $\hG$,
  $$
  r_{(\hG,\widehat{w})}(\mdeg \ \L_{|Z_0})\geq r(Z_b, \L_{|Z_b})\geq r
  $$
  and therefore
 $W^r_d(\hG,\widehat{w})\neq \emptyset.$ 

Now, by construction
$(\hG,\widehat{w})$ a refinement of $(G, w)$ (the dual graph of $X$). Hence the first part is proved.

For the next part, consider the tropical curve $\widehat{\Gamma}=(\hG,\widehat{w},\widehat{\ell})$
 with $\widehat{\ell}(e)=1$ for every $e\in E(\hG)$. 
 Let $D\in W^r_d(\hG,\widehat{w})$. Then  $D$ is also a divisor on $\widehat{\Gamma}$ (cf. Diagram \eqref{diag2}).
 We claim that $r_{\widehat{\Gamma}}(D)\geq r$.
 
 We have, by definition, 
 $$
 r\leq r_{(\hG,\widehat{w})}(D)=r_{\hG^{\hw}}(D).
 $$
 Let $\widehat{\Gamma}^{\hw}=(\hG^{\hw},\mo, \widehat{\ell}^{\hw})$
 be the tropical curve such that $\widehat{\ell}^{\hw}(e)=1$ for every $e\in E(\Gamma^{\hw})$; so $D$ is also a divisor on $\widehat{\Gamma}^{\hw}$.
By  
 \cite[Thm 1.3]{luo}, we have
 $$
 r_{\hG^{\hw}}(D)=r_{\widehat{\Gamma}^{\hw}}(D).
 $$
 Now, as we noticed in \eqref{ACRR} we have
$$
 r_{\widehat{\Gamma}^{\hw}}(D)=r_{\widehat{\Gamma}}(D). 
 $$
The claim is proved; therefore
  $ 
  W^r_d(\widehat{\Gamma})\neq \emptyset.
  $ 

The supporting graph $(\hG,\widehat{w})$ of $\widehat{\Gamma}$ is not necessarily stable;  its stabilization,  obtained    by removing
every 2-valent vertex of weight zero, is the original $(G, w)$, so that
 $\widehat{\Gamma}$ is tropically equivalent to a curve 
$\Gamma\in \Mgt(G,w)$.
Since  the underlying metric spaces of $\Gamma$ and $\widehat{\Gamma}$  coincide, we have
 $$
 W^r_d({\Gamma})=  W^r_d(\widehat{\Gamma})\neq \emptyset.
  $$
The statement is proved.
\end{proof}
 \begin{cor}
 \label{refcor}
Every   $d$-gonal stable weighted graph 
admits a divisorially $d$-gonal refinement.

\end{cor}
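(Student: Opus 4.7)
The plan is to derive this corollary directly by chaining together the two main theorems already established, thereby reducing the combinatorial claim to an algebraic one and then back to a combinatorial one.

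First, I would start from a $d$-gonal stable weighted graph $(G,w)$. Assuming $(G,w)$ is of Hurwitz type (which is automatic when $d\leq 3$ by Example~\ref{dleq3} and must be assumed for the application of Theorem~\ref{main} otherwise), I would invoke the first part of Theorem~\ref{main} to produce a $d$-gonal stable curve $X$ whose dual graph is exactly $(G,w)$. By Definition~\ref{dgcdef} this means $[X]\in \ov{M^1_{g,d}}$, so $X$ is an algebraic $(d,1)$-gonal curve realizing the prescribed combinatorial type.

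Next, I would apply part (A) of Theorem~\ref{mainconv} to $X$ with $r=1$. This yields a refinement $(\hG,\widehat{w})$ of the dual graph $(G,w)$ such that $W^1_d(\hG,\widehat{w})\neq \emptyset$. By Definition~\ref{ddef}, this is precisely the statement that $(\hG,\widehat{w})$ is divisorially $d$-gonal, which is what we want.

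The only real obstacle in the argument is the Hurwitz type hypothesis needed to invoke Theorem~\ref{main}; without it we cannot manufacture the algebraic curve $X$ and the bridge between combinatorial gonality and divisorial gonality breaks down. Apart from this caveat, the proof is essentially a one-line composition of Theorem~\ref{main} and Theorem~\ref{mainconv}, with no further calculation required.
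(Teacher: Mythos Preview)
Your proposal is correct and follows exactly the paper's argument: apply Theorem~\ref{main} to produce a $d$-gonal curve $X$ with dual graph $(G,w)$, then apply Theorem~\ref{mainconv}(A) with $r=1$ to obtain the divisorially $d$-gonal refinement. Your caveat about the Hurwitz-type hypothesis is well taken---the paper's own proof invokes Theorem~\ref{main} without restating this assumption, so the corollary should indeed be read as applying to $d$-gonal graphs of Hurwitz type (automatic for $d\leq 3$).
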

\begin{proof}
Let $(G,w)$ be a   $d$-gonal stable graph. By Theorem~\ref{main} there exists $X\in \ov{M^1_{g,d}}$ whose dual graph is $(G,w)$. By Theorem~\ref{mainconv} we are done.
\end{proof}

The proof of Theorem~\ref{mainconv} gives a more precise result, to state which we need some further terminology. 

Let $X$ be any curve. A {\it one-parameter smoothing}  of $X$ is a 
morphism  $f:\X\to (B,b_0)$, where $B$ is    smooth connected with $\dim B=1$, 
$b_0$ is a point of $B$ 
 such that $f^{-1}(b_0)=X$,  and all other fibers of $f$  are smooth curves.
By definition, $\X$ is a surface having only  singularities of type $A_n$ at the nodes of $X$. To  $f$
we associate the following length function $\ell_f$ on $G_X$:
$$
\ell_f:E(G_X)\la \R_{>0};\quad \quad e\mapsto n(e)
$$
where $n(e)$ is the integer defined by the fact that $\X$ has a singularity  of type $A_{n(e)-1}$  at the node of $X$ corresponding to $e$. In particular, if $\X$ is nonsingular, then $\ell_f$ is constant equal to one.
This defines the following tropical curve associated to $f$:
$$
\Gamma_f=(G_X,w_X, \ell_f).
$$
Similarly, we   define a refinement of the dual graph of $X$
by inserting $n(e)-1$ vertices of weight zero in  $e$, for every $e\in E(G_X)$;
we denote this refinement by $(G_f,w_f)$.  Now, 
if $\ZZ\to \X$ is the minimal resolution of singularities   and $h:\ZZ\to B$ the composition with $f$, then  
$(G_f,w_f)$ is the dual graph
of the fiber of $h$ over $b_0$;  
we denote by $X_f$ this fiber. 

For example, the surface $\X$ is nonsingular if and only if $X=X_f$, if and only if $(G_X,w_X)= (G_f,w_f)$

The following is a consequence  the proof of Theorem~\ref{mainconv},
where $X_f$ corresponds to the curve
  $Z_0$, while   $(G_f,w_f)=(\hG,\hw)$, and $\Gamma_f=\Gamma.$

\begin{prop}
\label{mainconvgen}
Let $f:\X\to (B,b_0)$ be a one-parameter smoothing of the curve $X$.
If the general fiber of $f$ is $(d,r)$-gonal
(i.e. if $W^r_d(f^{-1}(b))\neq \emptyset$ for every $b\neq b_0$) then the following facts hold.
\begin{enumerate}
\item
$W^r_d(G_f,w_f)\neq \emptyset$.
\item
$W^r_d(\Gamma_f)\neq \emptyset$.
\item
$W^r_d(X_f)\neq \emptyset$.
\end{enumerate}
\end{prop}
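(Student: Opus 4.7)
The plan is to reproduce the proof of Theorem~\ref{mainconv}, making use of the identifications $X_f = Z_0$, $(G_f, w_f) = (\hG, \hw)$, and $\Gamma_f = \Gamma$ spelled out in the paragraph preceding the statement. Under these identifications, conclusions (1) and (2) are exactly those of Theorem~\ref{mainconv}; the only new content is (3).

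Starting from the one-parameter smoothing $f: \X \to (B, b_0)$, I would pass to the minimal desingularization $h: \ZZ \to B$ of $\X$. By the very definition of a one-parameter smoothing, the central fiber of $h$ is $X_f$, with dual graph $(G_f, w_f)$ and associated tropical curve $\Gamma_f$. The hypothesis that the general fiber of $f$ is $(d,r)$-gonal produces, possibly after a finite (ramified) base change as in Theorem~\ref{mainconv}, a section of the $W^r_d$-family over $B^*$, which corresponds to a line bundle $\L^*$ on $\ZZ^* := \ZZ \setminus X_f$ with $r(X_b, \L^*|_{X_b}) \geq r$ for every $b \in B^*$. Smoothness of $\ZZ$ allows one to extend $\L^*$ to a line bundle $\L$ on $\ZZ$.

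With this setup, (1) is immediate from the weighted specialization Lemma \cite[Thm.~4.9]{AC}: $r_{(G_f, w_f)}(\mdeg \L|_{X_f}) \geq r$, which exhibits the required element of $W^r_d(G_f, w_f)$. Part (2) follows from (1) by the chain of rank-preserving identifications used in the proof of Theorem~\ref{mainconv} (invoking the blow-up construction of Definition~\ref{varG}, Luo's Theorem \cite[Thm.~1.3]{luo}, and the tropical-to-weighted-graph compatibility \eqref{ACRR}); the essential ingredient is that the metric graph underlying $\Gamma_f$ is isometric to that of $(G_f, w_f)$ with unit edge lengths, since the latter amounts to subdividing each edge $e$ of $G_X$ into $n(e)$ unit pieces and thus recovers the length $n(e)$ of $e$ in $\Gamma_f$. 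Part (3), the statement not already covered by Theorem~\ref{mainconv}, follows from upper semicontinuity of cohomology applied to the flat family $h$: for $b \in B^*$ general,
\[
h^0(X_f, \L|_{X_f}) \;\geq\; h^0(X_b, \L|_{X_b}) \;\geq\; r+1,
\]
and since the degree is constant in a flat family, $\L|_{X_f}$ represents a point of $W^r_d(X_f)$.

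The main technical obstacle, which this sketch glosses over, is the base-change step: if a ramified cover $B' \to B$ of degree $k > 1$ is required to extract the section of the $W^r_d$-family, then $\X \times_B B'$ acquires worse surface singularities (of type $A_{kn(e)-1}$ at each node of $X$), and the central fiber of its minimal desingularization is a \emph{strict} refinement of $X_f$. To obtain conclusions stated intrinsically for $X_f$, $(G_f, w_f)$, $\Gamma_f$, one must then descend the line bundle produced on the refined model down to $\ZZ$ — for instance, by twisting by the exceptional divisors of $\ZZ' \to \ZZ \times_B B'$ so as to trivialize the bundle along those exceptional fibers and then applying flat descent along the finite cover $\ZZ \times_B B' \to \ZZ$ — in order to make the identifications $X_f = Z_0$, $(G_f, w_f) = (\hG, \hw)$, $\Gamma_f = \Gamma$ hold literally. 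This descent is technical but routine, and is the one point where this proposition requires slightly more care than its counterpart Theorem~\ref{mainconv}, which was content with conclusions for \emph{some} refinement of the dual graph of $X$.
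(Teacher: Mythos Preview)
Your approach matches the paper's: its entire proof is the sentence immediately preceding the statement, asserting that the proposition follows from the proof of Theorem~\ref{mainconv} under the identifications $X_f=Z_0$, $(G_f,w_f)=(\hG,\hw)$, $\Gamma_f=\Gamma$, and part~(3) is already contained in the line ``$r(Z_b,\L_{|Z_b})\geq r$ for every $b\in B$'' of that proof---which is exactly the semicontinuity step you make explicit. Your final paragraph on the ramified base change names a genuine subtlety the paper passes over in silence; the paper simply asserts the identifications without supplying the descent argument you sketch, so here you are being more careful than the source.
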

    \begin{remark}
 \label{skeleton}
The tropical curve $\Gamma_f$  may be interpreted as a Berkovich skeleton of the generic fiber $\X_K$ of $\X\to B$, where $K$ is the function field of $B$ (note that $\Gamma_f$ depends on $\X$). Then the   theorem says that the Berkovich skeleton of a $(d,r)$-gonal smooth algebraic curve over $K$ is
a $(d,r)$-gonal tropical curve.
\end{remark}

 \section{The hyperelliptic case}
 \label{hypsec}
 \subsection{Hyperelliptic weighted graphs}
 Recall that a graph is hyperelliptic if it has a divisor of degree two and rank one.
 Hyperelliptic graphs free from loops and weights have been thoroughly studied in \cite{BN}. In this subsection we    extend some of  their results to weighted graphs admitting loops.

Recall the notation of Definition~\ref{varG}.
We will use the following  terminology.
A   2-valent vertex of     is said to be   {\it special} if  its removal  creates a loop.  For example, given  $(G,w)$, every vertex in $V(G^w)\smallsetminus V(G)$ is special.

  \begin{lemma}
\label{lmhyp} Let 
 $(G,w)$ be a weighted graph of genus $g$. Then
 $(G,w)$ is hyperelliptic if and only if so is $G^w$ if and only if so is $(G^0,w^0)$.
\end{lemma}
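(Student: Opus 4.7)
The plan is to observe that the three notions of hyperellipticity collapse to a single question about divisors on $G^w$, and then settle the one non-trivial direction by chip-firing on the $2$-valent vertices of $V(G^w)\setminus V(G)$.

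First I would unwind the rank definitions. By Definition~\ref{varG} and the remark immediately following it, $(G^0)^{w^0}=G^w$, so by the very definition of rank for weighted graphs
\[
r_{(G,w)}(D)=r_{G^w}(D)\quad\text{for every }D\in\Div(G,w),
\]
and likewise $r_{(G^0,w^0)}(D)=r_{(G^0)^{w^0}}(D)=r_{G^w}(D)$ for every $D\in\Div(G^0,w^0)$. Combined with the inclusions $V(G)\subseteq V(G^0)\subseteq V(G^w)$, this immediately yields the forward implications $(G,w)\text{ hyp}\Rightarrow(G^0,w^0)\text{ hyp}\Rightarrow G^w\text{ hyp}$: the same divisor witnesses hyperellipticity in the larger graph. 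So the actual content of the lemma is to produce, given any degree-$2$ divisor $D$ on $G^w$ with $r_{G^w}(D)\ge 1$, a linearly equivalent representative supported on $V(G)$; the analogous statement for $V(G^0)$ then follows a fortiori. The case $g\le 1$ is handled by Remark~\ref{g01}, so I may assume $g\ge 2$.

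The key structural observation is that every $u\in V(G^w)\setminus V(G)$ is, by construction, a $2$-valent vertex whose two incident edges both end at a single vertex $v(u)\in V(G)$: indeed, $u$ subdivides either a loop of $G$ or one of the weight loops added at $v(u)$. Firing $u$ in $G^w$ therefore yields the crucial relation
\[
2u\sim 2v(u).
\]
Writing an effective representative of the hyperelliptic class as $D=p+q$, I would then argue by cases. If $p,q\in V(G)$ there is nothing to prove. If $p=q=u\notin V(G)$ then $D=2u\sim 2v(u)\in\Div(G,w)$. In the mixed cases, I would combine the relation $2u\sim 2v(u)$ with the rank-$1$ hypothesis (which, for any chosen $v'\in V(G)$, yields $D\sim v'+q'$ for some $q'\in V(G^w)$) to push both chips into $V(G)$.

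The main obstacle is the last case: $D=u+p$ with $u\in V(G^w)\setminus V(G)$ and $p\neq u$, in particular when the rank-$1$ argument returns a ``partner'' $q'\in V(G^w)\setminus V(G)$ for every choice of $v'\in V(G)$. Here I would either iterate, exploiting finiteness of $V(G^w)$ and the fact that $2q'\sim 2v(q')$ for each such $q'$, or (more cleanly) invoke the characterization of hyperelliptic loopless graphs via involutions from \cite{BN}, to be extended later in Section~\ref{hypsec}: the hyperelliptic involution $\iota$ on $G^w$ preserves valency and the ``coinciding-neighbors'' property, hence restricts to $V(G)$, so that $v+\iota(v)\in\Div(G,w)$ for any $v\in V(G)$ is a $V(G)$-supported representative of the hyperelliptic class.
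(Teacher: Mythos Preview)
Your setup matches the paper's exactly: reduce to $g\ge 2$, observe the forward implications are definitional, and for the converse take an effective $D=u+u'$ of rank $1$ on $G^w$ and try to move its support into $V(G)$. The divergence is in how the ``mixed case'' $u\in V(G^w)\smallsetminus V(G)$, $u'\neq u$ is handled.

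The paper does not handle this case at all --- it shows it \emph{cannot occur}. The point is that such a $u$ is a special $2$-valent vertex sitting on a $2$-cycle attached to the rest of $G^w$ at the single cut vertex $v=v(u)$; invoking \cite[Lm.~2.5(4)]{AC} one gets $u'\neq v$, and then a short direct check (using that every chip leaving the $2$-cycle must pass through $v$) forces $u'=u$. So the only case to treat is $D=2u$, where your relation $2u\sim 2v(u)$ finishes immediately.

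By contrast, neither of your proposed workarounds for the mixed case closes the gap. The iteration does not visibly terminate: from $D\sim v'+q'$ with $q'\notin V(G)$ you have $2q'\sim 2v(q')$, but there is no clean way to combine this with a \emph{degree-two} representative of $D$ without doubling the degree, and nothing prevents the partner $q'$ from landing outside $V(G)$ at every step. The involution route is circular: the existence of the hyperelliptic involution for weighted graphs is established in Lemma~\ref{hypw}, whose proof explicitly invokes Lemma~\ref{lmhyp} (and Lemma~\ref{hypb}, which also uses Lemma~\ref{lmhyp}). You should instead prove directly that $r_{G^w}(u+u')\ge 1$ with $u$ special forces $u'=u$.
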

\begin{proof} By Remark~\ref{g01} we can assume $g\geq 2$.
By definition, if $G$ is hyperelliptic so is $G^w$. Conversely, assume $G^w$ hyperelliptic and let 
$D\in \Div(G^w)$ be an effective divisor of degree $2$ and rank $1$. If $\supp D\subset V(G)$ we are done, as $r_{(G,w)}(D)=r_{G^w}(D)$. Otherwise, suppose $D=u+u'$ with
$u\in V(G^w)\smallsetminus V(G)$. So, $u$ is a special vertex whose   removal    creates  a loop based at a vertex $v$ of $G$.
As  $r_{G^w}(u+u')=1$, it is clear that $u'\neq v$ (e.g. by  \cite[Lm. 2.5(4)]{AC}), and
a trivial direct checking 
yields that    $u'=u$.
Moreover, we have  $2u\sim 2v$ and hence
$r_{G^w}(2v)=1$, by   \cite[Lm. 2.5(3)]{AC}).

As  $(G^0)^{w^0}=G^w$, the second double implication follows the first.
\end{proof}
Let $e$ be a non-loop edge of a weighted graph $(G,w)$
and let $v_1,v_2\in V(G)$ be its endpoints. Recall that
the {\it (weighted) contraction} of $e$ is defined as the graph $(G_e,w_e)$
such that $e$ is contracted to a vertex $\oov$ of $G_e$,  and 
$w_e(\oov)=w(v_1)+w(v_2)$, whereas $w_e$ is equal to $w$ on every remaining vertex of $G_e$.

We denote by $(\ooG,\oow)$ the  2-edge-connected weighted graph obtained by contracting every bridge of $G$
as described above. 

By \cite[Cor 5.11]{BN} a weightless, loopless graph is hyperelliptic if and only if so is $\ooG$. The following Lemma extends this fact to the weighted case.
\begin{lemma}
\label{hypb}
Let  $(G,w)$ be a loopless   weighted graph of genus at least 2.
Then $(G,w)$ is hyperelliptic if and only if so is $(\ov{G},\ov{w})$.
\end{lemma}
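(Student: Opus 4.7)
The plan is to reduce to the weightless, loopless statement \cite[Cor 5.11]{BN} by passing from $(G,w)$ and $(\ooG,\oow)$ to the corresponding weightless loopless models $G^w$ and $(\ooG)^{\oow}$, and then to verify the key commutation identity $(\ooG)^{\oow}=\ov{G^w}$.

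First I would recall that by Lemma~\ref{lmhyp} (applied once to $(G,w)$ and once to $(\ooG,\oow)$, whose genera are both $\geq 2$) the statement is equivalent to:
\begin{equation*}
G^w \text{ is hyperelliptic} \iff (\ooG)^{\oow} \text{ is hyperelliptic.}
\end{equation*}
Since $G^w$ is weightless and loopless, \cite[Cor 5.11]{BN} tells us that $G^w$ is hyperelliptic if and only if $\ov{G^w}$ is hyperelliptic. So the whole proof reduces to establishing the identity
\begin{equation*}
\ov{G^w}=(\ooG)^{\oow}.
\end{equation*}

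To prove this identity I would argue in two steps. Step one: the set of bridges of $G^w$ coincides with the set of bridges of $G$. Indeed, $G^w$ is obtained from $G$ by attaching, at each vertex $v$, $w(v)$ copies of a $2$-cycle (the subdivision of a loop based at $v$); any edge in such a $2$-cycle is not a bridge (its removal leaves the other edge of the cycle), and conversely, if $e\in E(G)$ is a bridge of $G$ then removing $e$ from $G^w$ still disconnects $G^w$, because the attached $2$-cycles live at individual vertices. Step two: contracting the common bridge set inside $G^w$ amounts to performing the contractions on $G$ first and then attaching the subdivided loops at the identified vertex, with multiplicities that add up. More precisely, if a bridge $e=[v_1,v_2]$ of $G$ is contracted to $\oov$, then in $G^w$ the $w(v_1)$ subdivided loops based at $v_1$ and the $w(v_2)$ subdivided loops based at $v_2$ become $w(v_1)+w(v_2)=\oow(\oov)$ subdivided loops based at $\oov$; iterating over all bridges yields exactly $(\ooG)^{\oow}$.

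Combining these, the chain
\begin{equation*}
(G,w)\text{ hyp.}\iff G^w\text{ hyp.}\iff \ov{G^w}\text{ hyp.}\iff (\ooG)^{\oow}\text{ hyp.}\iff (\ooG,\oow)\text{ hyp.}
\end{equation*}
gives the lemma, the middle equivalence being \cite[Cor 5.11]{BN} and the outer two being Lemma~\ref{lmhyp}. The only nontrivial point is the commutation $\ov{G^w}=(\ooG)^{\oow}$; this is the step I expect to carry the bulk of the verification, but it is purely combinatorial and follows from the explicit local description of the two operations as indicated above.
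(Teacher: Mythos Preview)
Your argument is correct and follows essentially the same route as the paper: both reduce via Lemma~\ref{lmhyp} to the weightless graphs $G^w$ and $(\ooG)^{\oow}$, invoke \cite[Cor 5.11]{BN}, and use that the bridges of $G$ and $G^w$ are in natural bijection so that $\ov{G^w}=(\ooG)^{\oow}$. You have in fact supplied more detail on this last combinatorial identity than the paper does.
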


 \begin{proof}
By Lemma~\ref{lmhyp}, $(G,w)$ is hyperelliptic if and only if so is $G^w$. Similarly, $(\ov{G}, \oow)$ is hyperelliptic if and only if so is $\ov{G}^{\oow}$. Now,  $\ov{G}^{\oow}$ is obtained from $G^w$ by contracting all of its bridges (indeed, the bridges of $G$ and $G^w$ are in natural bijection). Therefore, as we said above, 
$G^w$ is hyperelliptic if and only if so is $\ov{G}^{\oow}$. So we are done.
  \end{proof}

Recall, from \cite{BN},   that a loopless, 2-edge-connected, weightless graph $G$ is hyperelliptic if and only if it has an involution $\iota$ such that $G/\iota$ is a tree.
If $G$ has genus at least 2, this involution is unique and will be called the {\it hyperelliptic} involution.
Furthermore, the quotient map $G\to G/\iota$ is a non-degenerate harmonic morphism,
unless $|V(G)|=2$; see \cite[Thm 5.12 and Cor 5.15]{BN}
We are going to generalize this to the weighted case.

\begin{remark}
\label{spvertex}
Let $G$ be a loopless, 2-edge-connected hyperelliptic  graph of genus $\geq 2$
and $\iota$ its hyperelliptic involution.
Let $v\in V(G)$ be a   special vertex whose removal creates a loop  based at the vertex $u$.
Then $\iota(v)=v$, \  $\iota(u)=u$ and $\iota$ swaps the two edges adjacent to $v$.

Indeed, $G/\iota$ is a tree, hence  the two edges adjacent to $v$   are   mapped to  the same edge by 
$G\to G/\iota$.
As $v$ has valency 2 and $u$ has
 valency at least 3 ($G$ has genus at least 2), $\iota$ cannot swap $v$ and $u$.   Hence $\iota(v)=v$ and $\iota(u)=u$. 
\end{remark}

\begin{lemma}
\label{hypw}
Let  $(G,w)$ be a loopless, 2-edge-connected weighted graph of genus at least 2.
Then $(G,w)$ is hyperelliptic if and only if $G$ has an involution $\iota$,
 the {\emph {hyperelliptic involution}},
 fixing every vertex of positive weight and  such that $G/\iota$ is a tree.

 $\iota$ is unique  
 and, if $|V(G)|\geq 3$,  then the quotient $G\to G/\iota$ is a non-degenerate harmonic morphism of degree 2.
\end{lemma}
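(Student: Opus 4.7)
The plan is to reduce the weighted, loopless case to the loopless, weightless, 2-edge-connected setting of \cite{BN} by passing to the auxiliary graph $G^w$ of Definition~\ref{varG}, and then to transport involutions and quotients back and forth between $G$ and $G^w$.

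For the ``only if'' direction, I would assume $(G,w)$ is hyperelliptic. By Lemma~\ref{lmhyp} the graph $G^w$ is hyperelliptic, and it is also weightless, loopless, of genus $\geq 2$, and 2-edge-connected (bridges of $G$ and $G^w$ are in natural bijection, and there are none). Hence \cite[Cor. 5.12]{BN} produces a unique hyperelliptic involution $\iota^w$ on $G^w$ with $G^w/\iota^w$ a tree. By Remark~\ref{spvertex} every special vertex of $G^w$ and every vertex of $G$ of positive weight is fixed by $\iota^w$. Since the complement $V(G^w)\smallsetminus V(G)$ is pointwise fixed, $\iota^w$ preserves $V(G)$; since no edge in $E(G^w)\smallsetminus E(G)$ has both endpoints in $V(G)$, it also preserves $E(G)$. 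Therefore $\iota^w$ restricts to an involution $\iota$ on $G$ fixing every positive weight vertex. To show $G/\iota$ is a tree, I would observe that $G^w/\iota^w$ is obtained from $G/\iota$ by attaching, at each positive weight vertex $[v]$, one leaf per loop added at $v$ in the construction of $G^w$ (the special vertex is $\iota^w$-fixed and its two edges to $v$ are swapped, producing a single leaf-edge in the quotient). Hence $G^w/\iota^w$ being a tree forces $G/\iota$ to be a tree.

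For the converse direction, given $\iota$ on $G$ fixing all positive weight vertices with $G/\iota$ a tree, I would extend $\iota$ to an involution $\iota^w$ of $G^w$ by declaring every special vertex fixed and swapping the two edges adjacent to each special vertex. The same leaf-attachment observation shows $G^w/\iota^w$ is a tree; thus $G^w$ is hyperelliptic by \cite{BN}, and $(G,w)$ is hyperelliptic by Lemma~\ref{lmhyp}. Uniqueness of $\iota$ follows at once from uniqueness of $\iota^w$: any two admissible involutions $\iota_1,\iota_2$ on $G$ extend to involutions of $G^w$ with tree quotients, and these must coincide.

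For the harmonic statement, assume $|V(G)|\geq 3$, so also $|V(G^w)|\geq 3$. I would put on $\phi\colon(G,w)\to(G/\iota,\mo)$ the natural indices $r_\phi(e)=2$ when $\iota(e)=e$ and $r_\phi(e)=1$ otherwise, with $m_\phi(v)=2$ for $v$ fixed and $m_\phi(v)=1$ for $v$ in a swapped pair. A direct check shows $\phi$ is pseudo-harmonic of degree $2$ and non-degenerate. A short Riemann--Hurwitz computation (using that $\val_G(v)-2\val_{G/\iota}([v])$ equals minus the number of edges at $v$ fixed by $\iota$, when $v$ is fixed) reduces the harmonic inequality \eqref{RH} to the trivial identity when $v$ is in a swapped pair, and to
\[
\#\{e\in E_v(G):\iota(e)=e\}\leq 2+2w(v)
\]
when $v$ is fixed. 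To verify this I would invoke \cite[Cor. 5.15]{BN}: the weightless simple morphism $G^w\to G^w/\iota^w$ is non-degenerate harmonic, so inequality \eqref{rest2} applied to it gives that the number of $\iota^w$-fixed edges at $v$ in $G^w$ is at most $2$. But every loop added at $v$ contributes only swapped edges, so this count coincides with the number of $\iota$-fixed edges at $v$ in $G$, yielding even the stronger bound of $2$.

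The main obstacle is the bookkeeping relating involutions, incidence data, and tree quotients on $G$ versus $G^w$; once that is cleanly set up, the weighted harmonic condition is in fact weaker than the weightless analogue on $G^w$ and follows directly from \cite{BN}.
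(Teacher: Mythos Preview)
Your treatment of the biconditional and of uniqueness is correct and follows exactly the paper's route: pass to $G^w$, apply the Baker--Norine theory there, and transport the involution and the tree-quotient description back and forth using Remark~\ref{spvertex} and Lemma~\ref{lmhyp}. (The paper argues uniqueness by applying \cite{BN} directly to the underlying weightless graph $G$ via Remark~\ref{BNrk}; your route through $G^w$ is an equally valid minor variant.)

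Where your argument goes wrong is in the harmonic statement. The paper does no index bookkeeping at all: it simply notes that since $G$ is $2$-edge-connected Remark~\ref{BNrk} applies, so harmonicity and non-degeneracy of $G\to G/\iota$ come straight from \cite{BN}. Your explicit assignment $r_\phi(e)=2$ whenever $\iota(e)=e$ is not compatible with the quotient map to a tree. An edge with $\iota(e)=e$ either has both half-edges fixed---then both endpoints are $\iota$-fixed and $e$ is the sole preimage of $[e]\in E(G/\iota)$, forcing $e$ to be a bridge, which is impossible here---or has its half-edges swapped, in which case the endpoints of $e$ lie in the same $\iota$-orbit and $e$ must be \emph{contracted} by the quotient (otherwise its image would be a loop in the tree $G/\iota$). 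So $G\to G/\iota$ is in fact a \emph{simple} indexed morphism; under that correct indexing \eqref{rest2} reads $-\#\{\text{edges at }v\text{ contracted}\}\le 2(m_\phi(v)-1+w(v))$, which is automatic once the map is non-degenerate. Consequently your reduction at a swapped vertex is not ``trivial'' under your indices (a flipped edge at a weight-zero swapped vertex would violate it), and your last step---extracting the bound $\#\{\iota^w\text{-fixed edges at }v\}\le 2$ from \eqref{rest2} for the simple morphism $G^w\to G^w/\iota^w$---fails because for simple morphisms the left side of \eqref{rest2} is non-positive, so the inequality is vacuous and yields no such upper bound. Replacing your index computation by the one-line appeal to Remark~\ref{BNrk} and \cite{BN} fixes the argument.
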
 
 \begin{proof}
 Assume that $G$ has an involution as in the statement; then we extend $\iota$ to an involution $\iota^w$ of $G^w$ by requiring that $\iota^w$ fix  all the (special) vertices in $V(G^w)\smallsetminus V(G)$
 and swap  the two edges adjacent to them.
 It is clear that $G^w/\iota^w$ is the  tree obtained by adding $w(v)$ leaves to the vertex
 of $G/\iota$ corresponding to every vertex $v\in V(G)$. Hence $G^w$ is hyperelliptic, and hence so is $(G,w)$ by Lemma~\ref{lmhyp}.
 
 Conversely, suppose $G^w$ hyperelliptic and let $\iota^w$ be its hyperelliptic involution.
 Let $v\in V(G)\subset V(G^w)$ have positive weight. Then there is a 2-cycle in $G^w$ attached at $v$; let $e^+$ and $e^-$ be its two edges, and $u$ its special vertex. By Remark~\ref{spvertex} we know that
 $\iota^w$ fixes $v$ and $u$ and swaps $e^+$ and $e^-$.  Notice that the image in $G^w/\iota^w$ of  every such 2-cycle is a leaf.
 
We obtain that the restriction of $\iota^w$ to $G$ is an involution of $G$, written $\iota$, fixing all vertices of positive weight. Finally,
the quotient $G/\iota$ is the tree obtained from $ G^w/\iota^w$ by removing all the above leaves, so we are done.

As $G$ is 2-edge-connected, by  Remark~\ref{BNrk} we can apply   some results from \cite{BN}. 
In particular, the uniqueness of $\iota$ follows from  Corollary  5.14. Next, if $|V(G)|\geq 3$ then $G\to G/\iota$
is harmonic and non-degenerate by   Theorem  5.14 and Lemma 5.6.\end{proof}
 
\begin{cor}
\label{binary} Let $(G,w)$ be a loopless, 2-edge-connected  graph of genus at least 2, having exactly two vertices,  $v_1$ and $v_2$.
Then $(G,w)$ is hyperelliptic if and ony if either $|E(G)|=2$, or $|E(G)|\geq 3$ and $w(v_1)=w(v_2)=0$.
\end{cor}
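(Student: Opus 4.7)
The plan is to reduce everything to Lemma~\ref{hypw}, which characterizes hyperellipticity of a loopless, 2-edge-connected weighted graph by the existence of an involution fixing every positive-weight vertex and yielding a tree quotient. First I would unpack the structure of $G$: with only two vertices and no loops, $G$ is the banana graph on $n := |E(G)|$ parallel edges; 2-edge-connectedness forces $n \geq 2$, and the genus hypothesis forces $n - 1 + w(v_1) + w(v_2) \geq 2$.

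For the forward direction, suppose $(G,w)$ is hyperelliptic and let $\iota$ be the involution provided by Lemma~\ref{hypw}. The key observation is that if $w(v_i) > 0$ for some $i$, then $\iota(v_i) = v_i$, and since $|V(G)| = 2$ this forces $\iota$ to fix both vertices. Consequently $\iota$ acts only on $E(G)$. If $f$ denotes the number of fixed edges, the remaining $n - f$ edges are swapped in pairs, so the quotient $G/\iota$ has two vertices joined by $f + (n-f)/2$ edges, and its first Betti number vanishes precisely when $f + n = 2$, forcing $n = 2$. Contrapositively, if $n \geq 3$ then both weights must be zero.

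For the backward direction I would treat the two cases separately. When $n = 2$, I would exhibit the explicit involution fixing both vertices and swapping the two edges; it trivially fixes every vertex (regardless of weights), its quotient is a single edge between two vertices — a tree — and Lemma~\ref{hypw} yields hyperellipticity. When $n \geq 3$ with $w(v_1) = w(v_2) = 0$, I would simply invoke the computation already made in Example~\ref{binex}, where $r_G(v_1 + v_2) = 1$ was verified for the weightless banana graph, giving divisorial hyperellipticity by Definition~\ref{ddef}.

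I do not expect any real obstacle: Lemma~\ref{hypw} does the conceptual work, the two-vertex combinatorics reduce $\iota$ to a simple bookkeeping of fixed and swapped edges, and Example~\ref{binex} handles the one case whose tree quotient is a little delicate to describe directly.
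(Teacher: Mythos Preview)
Your proposal is correct and follows essentially the same route as the paper: the forward direction is exactly the paper's argument via Lemma~\ref{hypw} (a positive weight forces both vertices to be fixed, and then the edge orbits yield at least two edges in the quotient when $n\geq 3$), and the backward direction is what the paper dismisses as ``trivial; see Example~\ref{binex}.'' Your treatment is a bit more explicit---the edge-orbit count $f+(n-f)/2$ and the direct involution for $n=2$---but there is no substantive difference in strategy.
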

\begin{proof}
Assume  $(G,w)$ hyperelliptic.
Let $|E(G)|\geq 3$;  by contradiction,
suppose $w(v_1)\geq 1$. By Lemma~\ref{hypw} the hyperelliptic involution fixes $v_1$, and hence it fixes also $v_2$; therefore $G/\iota$ has two vertices. Since   there are at least three edges between $v_1$ and $v_2$, such edges fall into at least two orbits under $\iota$, and each such orbit is an edge of  the quotient $G/\iota$, which therefore cannot be a tree. This is a contradiction.
The other implication  is trivial; see  Example~\ref{binex}.
\end{proof}
\subsection{Relating hyperelliptic curves and graphs}
\begin{prop}
\label{mainconvh}
Let $X$ be a hyperelliptic stable curve. Then  its  dual graph  $(G_X,w_X)$ is hyperelliptic.
\end{prop}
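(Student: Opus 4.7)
The plan is to derive the proposition from Theorem~\ref{main} together with the hyperelliptic-graph lemmas of Subsection~4.1, and to handle the reduction from the Hurwitz $d$-gonal graph (which is only equivalent to $(G_X,w_X)$) down to $(G_X,w_X)$ itself via an involution-plus-subdivision argument.

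First, since $X$ is stable hyperelliptic, it is $2$-gonal, and Theorem~\ref{main} yields that its dual graph $(G_X,w_X)$ is equivalent (in the stable sense) to a $2$-gonal graph $(G',w')$ of Hurwitz type; ``of Hurwitz type'' is automatic here by Example~\ref{dleq3}. Because $(G_X,w_X)$ is stable, being the dual graph of a stable curve, this equivalence simply means that $(G_X,w_X)$ is the weighted stabilization of $(G',w')$.

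Next I would show that $(G',w')$ is divisorially hyperelliptic. By Definition~\ref{dgdef} the loopless graph $((G')^0,(w')^0)$ admits a non-degenerate harmonic morphism of degree $2$ to a tree, and by the assertion recorded at the end of Remark~\ref{2ex} such a morphism forces divisorial hyperellipticity (concretely, one pulls back any vertex $u$ of the target tree, uses that $r_T(u)=1$ on the tree and that pull-back preserves linear equivalence by Remark~\ref{BNext}, and checks that $\phi^{\ast} u$ has rank at least $1$). Lemma~\ref{lmhyp} then transports divisorial hyperellipticity from $((G')^0,(w')^0)$ to $(G',w')$.

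The remaining step, and the principal obstacle, is the descent of divisorial hyperellipticity from $(G',w')$ to its stabilization $(G_X,w_X)$. I would use Lemma~\ref{lmhyp} to replace both graphs by their loopless weightless versions $(G')^{w'}$ and $(G_X)^{w_X}$, then apply Lemma~\ref{hypb} to pass to the bridge-contracted graphs $\overline{(G')^{w'}}$ and $\overline{(G_X)^{w_X}}$. The key combinatorial observation is that $\overline{(G')^{w'}}$ is a subdivision of $\overline{(G_X)^{w_X}}$: the weight-zero leaves of $(G',w')$ that the stabilization removes correspond to bridges in $(G')^{w'}$ and are absorbed by bridge contraction; weight-zero $2$-valent vertices of $(G',w')$ lying on bridges of $(G')^{w'}$ are likewise absorbed; and those lying on cycles survive as $2$-valent vertices inserted in non-bridge edges of $\overline{(G_X)^{w_X}}$. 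By the involution characterization of Lemma~\ref{hypw}, subdivision of a loopless $2$-edge-connected weighted graph preserves hyperellipticity---any hyperelliptic involution on $\overline{(G_X)^{w_X}}$ extends to $\overline{(G')^{w'}}$ by fixing each inserted $2$-valent weight-zero vertex and swapping its two adjacent edges, keeping the quotient a tree, and conversely any such involution on the subdivision restricts. Reversing Lemmas~\ref{hypb} and \ref{lmhyp} then delivers divisorial hyperellipticity of $(G_X,w_X)$. The delicate part will be the combinatorial bookkeeping---tracking how the $(\cdot)^w$-construction interacts with the loops of $(G_X,w_X)$ and with the two stabilization operations of removing weight-zero leaves and weight-zero $2$-valent vertices---needed to verify the subdivision claim rigorously.
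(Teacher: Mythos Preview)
Your route is workable but genuinely different from the paper's. The paper does not go through Theorem~\ref{main} at all: it invokes Theorem~\ref{mainconv} to produce a \emph{refinement} $(\hG,\hw)$ of $(G_X,w_X)$ with $W^1_2(\hG,\hw)\neq\emptyset$, and then reduces the problem to the purely combinatorial Lemma~\ref{hyp2}, which says that removing all weight-zero $2$-valent vertices from a hyperelliptic graph yields a hyperelliptic graph. Because Theorem~\ref{mainconv} outputs a refinement rather than a stably equivalent graph, no weight-zero leaves appear, and the descent step is exactly Lemma~\ref{hyp2}; there is no need to pass through $2$-gonality, Remark~\ref{2ex}, or the bridge-contraction bookkeeping you outline.

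Your plan via Theorem~\ref{main} forces you to handle both weight-zero leaves and weight-zero $2$-valent vertices in the descent from $(G',w')$ to $(G_X,w_X)$. The leaf part is fine (leaves become bridges in $(G')^{w'}$ and vanish under $\overline{(\cdot)}$), and your subdivision claim for $\overline{(G')^{w'}}$ over $\overline{(G_X)^{w_X}}$ is correct. But the crucial step is the one you compress into ``conversely any such involution on the subdivision restricts'': this is not automatic, and is precisely the content of Lemma~\ref{hyp2}. One must argue by cases according to whether the hyperelliptic involution $\hiota$ on the subdivision fixes a given inserted $2$-valent vertex $\hv$ (then $\hiota$ swaps its two incident edges, and removing $\hv$ creates a single edge on which the induced involution acts by flipping, deleting a leaf from the quotient tree) or swaps $\hv$ with another such vertex $\hv'$ (then one removes both simultaneously and checks the quotient loses a $2$-valent vertex). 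Your parenthetical about the \emph{extension} direction---fixing each inserted vertex and swapping its two edges---is the direction you do not need, and in any case only works for subdivisions of edges already fixed by $\iota$; it does not by itself give the equivalence you assert. So your Step~3 is essentially a promise to reprove Lemma~\ref{hyp2}, and that is where the substance lies.
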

\begin{proof}
We write $(G,w)=(G_X,w_X)$ for simplicity.
By Theorem~\ref{mainconv}, there exists a hyperelliptic refinement, $(\hG,\hw)$,  of  $(G,w)$. Then the weightless graph $\hG^{\hw}$ is hyperelliptic. By Lemma~\ref{lmhyp}
it is enough to prove that the weightless graph $G^w$ is hyperelliptic.
Now, one easily checks that $G^w$ is obtained from $\hG^{\hw}$  by removing every  
non-special 2-valent vertex of weight zero, 
  and possibly some special   vertex of weight zero.
On the other  hand, by Lemma~\ref{lmhyp}, the removal of  any special vertex of weight zero does  not
alter being hyperelliptic.
 Therefore 
 $G^w$ is hyperelliptic if so is the graph obtained by removing every 2-valent vertex
 of weight zero
from $\hG^{\hw}$.
This  follows from the following Lemma~\ref{hyp2}.
\end{proof}

\begin{lemma}
\label{hyp2}
 Let $(\hG,\hw)$ be hyperelliptic of genus at least 2 and let $(G,w)$ be the graph obtained
from  $\hG$ by removing every  2-valent vertex of weight zero. Then $G$
  is hyperelliptic.
\end{lemma}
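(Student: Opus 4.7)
The plan is to reduce to the weightless, loopless case via Lemma~\ref{lmhyp} and then exploit the invariance of divisor rank under insertion/removal of a 2-valent weight-zero vertex. By Lemma~\ref{lmhyp}, $(\hG,\hw)$ is hyperelliptic if and only if the weightless loopless graph $\hG^{\hw}$ is, and likewise for $(G,w)$ and $G^w$. I would therefore reduce the statement to: $\hG^{\hw}$ hyperelliptic implies $G^w$ hyperelliptic. Analyzing the passage from $\hG$ to $G$: a non-special 2-valent weight-zero vertex of $\hG$ remains 2-valent in $\hG^{\hw}$ and is genuinely removed in forming $G^w$, whereas a special one (whose removal creates a loop at some vertex $u$) is effectively relabeled when that loop is re-opened by inserting a fresh weight-zero 2-valent vertex in $G^w$. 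Hence $G^w$ is obtained from $\hG^{\hw}$ by iteratively removing non-special 2-valent weight-zero vertices of a weightless loopless graph.

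By iteration, it suffices to prove: if $H$ is a weightless loopless hyperelliptic graph of genus at least $2$ and $v \in V(H)$ is a 2-valent vertex with distinct neighbors $u_1, u_2$, then the graph $H^- = H \smallsetminus v$, obtained by merging the two edges at $v$ into a single edge $(u_1,u_2)$, is hyperelliptic. Starting from a divisor $D \in \Div(H)$ with $\deg D = 2$ and $r_H(D)\geq 1$, I would first replace $D$ by a linearly equivalent divisor supported on $V(H)\smallsetminus\{v\}$. The main tool is chip-firing at $v$, which sends $D(v)\cdot v$ to $D(v)(u_1+u_2) - D(v)\,v$; combined with moves at $u_1$ or $u_2$ to adjust parity (using that $r_H(D)\geq 1$ forces $D$ to have effective representatives after subtracting any single point), one produces such a $D'$. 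This $D'$ then lives naturally on $H^-$ as a divisor of degree $2$.

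It remains to verify $r_{H^-}(D')\geq 1$, and this is the principal technical obstacle. The point is that a 2-valent weight-zero vertex is ``transparent'' for divisor theory. I would deduce this from Luo's theorem \cite[Thm 1.3]{luo}, already cited in the paper for diagram \eqref{diag2}: the metric graphs $\Gamma_H$ and $\Gamma_{H^-}$ associated to $H$ and $H^-$ with unit edge lengths become isomorphic after rescaling the merged edge of $H^-$ to length $2$, and tropical rank is invariant under both this rescaling and the corresponding subdivision. Therefore $r_{H^-}(D') = r_{\Gamma_{H^-}}(D') = r_{\Gamma_H}(D') = r_H(D') = r_H(D) \geq 1$, so $H^-$ is hyperelliptic and the lemma follows. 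The delicate step is this last chain of equalities, which packages together subdivision invariance of tropical rank with the equality of combinatorial and tropical rank for weightless loopless graphs.
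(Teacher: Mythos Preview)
Your reduction via Lemma~\ref{lmhyp} to the weightless loopless setting is sound and parallels the paper's own setup, and handling one non-special $2$-valent vertex at a time is the right move. But there is a genuine gap in the final chain of equalities. You assert that ``tropical rank is invariant under this rescaling'', i.e.\ that changing the length of the merged edge of $H^-$ from $1$ to $2$ leaves $r_{\Gamma}(D')$ unchanged. This is false in general: the rank of a divisor on a metric graph depends on the edge lengths, not merely on the underlying combinatorial type (indeed, this is precisely why the tropical hyperelliptic locus in $\Mgt$ is not a union of combinatorial strata; cf.\ \cite{chan}). Concretely, $\Gamma_{H^-}$ with all unit lengths and $\Gamma_H$ with all unit lengths are different metric graphs, so the step $r_{\Gamma_{H^-}}(D')=r_{\Gamma_H}(D')$ is unjustified, and Luo's theorem as cited in the paper only gives the equalities at the two ends of your chain. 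What you really need is the purely combinatorial statement that for $D\in\Div(H^-)\subset\Div(H)$ one has $r_{H^-}(D)=r_H(D)$; this subdivision invariance of the combinatorial rank is true, but it requires its own argument and is not a consequence of rescaling a metric. The intermediate step of moving $D$ off $v$ by chip-firing is also looser than it appears, though it can be salvaged.

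The paper sidesteps all of this by working with the involution characterization of Lemma~\ref{hypw} rather than with divisors. After reducing to the $2$-edge-connected loopless case with at least three vertices, one has the hyperelliptic involution $\hiota$ on $\hG$ with $\hG/\hiota$ a tree, and one checks by hand that removing either a non-special $2$-valent weight-zero vertex fixed by $\hiota$ (which maps to a leaf of the quotient), or a pair of such vertices swapped by $\hiota$ (which sit over a $2$-valent vertex of the quotient), yields a graph with an induced involution whose quotient is again a tree. This is elementary and stays within the toolkit already developed in the paper; your divisor-theoretic route is a reasonable alternative strategy, but to make it rigorous you must supply or cite a correct proof of subdivision invariance of the combinatorial rank rather than appeal to a rescaling argument that does not hold.
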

\begin{proof}
By Lemma~\ref{hypb}, contracting bridges does not alter   being hyperelliptic, hence we may assume that $\hG$ is 2-edge-connected.
By Lemma~\ref{lmhyp} up to inserting some special vertices  of weight zero we can also assume that $\hG$ has no loops. 
Finally, we can assume that $\hG$ has at least three vertices, for otherwise the result is trivial.

It suffices to prove that the loopless model $(G^0,w^0)$ (see Definition~\ref{dgdef}) of $(G,w)$ admits an involution $\iota$ fixing every vertex of positive weight and such that $G^0/\iota$ is a tree, 
 by Lemma~\ref{hypw}. As $(\hG,\hw)$ is hyperelliptic, it admits such an involution, denoted by $\hiota$. Recall that the quotient map $\hG \to \hG/\hiota\  $ is a non-degenerate harmonic morphism.

Observe that $G^0$ is obtained from $\hG$ by removing  all the non-special 2-valent vertices
of weight zero.
Let   $\hv\in V(\hG)$ be   such a  vertex and
write  $\he_1, \he_2$  for the edges of $\hG$ adjacent to $\hv$. 
To prove our 
result it suffices to show   that if one removes
from a hyperelliptic   graph either 
 a  non-special 2-valent vertex of weight zero
fixed by the hyperelliptic involution, or a pair of non-special  2-valent vertices swapped by  the hyperelliptic involution, then the resulting graph is hyperelliptic.

First, let   $\hiota (\hv)=\hv$ and let $(G',w')$ be the graph obtained by removing    $\hv$.
We have   $\hiota(\he_1)= \he_2$ (as $\hG \to \hG/\hiota\  $ is non-degenerate),
and   $\hv$ is mapped to a leaf of $\hG/\hiota$. 
Now,  $V(G')=V(\hG)\smallsetminus \{\hv\}$, and 
$E(G')=\{e\} \cup  E(\hG)\smallsetminus \{\he_1, \he_2\}$ where   $e$ is the edge created by removing $\hv$.
We define the involution $\iota'$ of $G'$
by restricting   $\hiota$ on $V(G')$ and on $E(\hG)\smallsetminus \{\he_1, \he_2\}$,
and by  setting  $\iota'(e)=e$. Since $\iota'$ swaps the two 
endpoints of $e$ (because so does $\hiota$), we have that $e$ is contracted to a point by the quotient $G'\to G'/\iota'$. Therefore
 $G'/\iota'$ is the tree
obtained from $\hG/\hiota$ by removing the leaf corresponding to $\hv$. 
It is clear that $\iota'$ fixes all vertices of positive weight, hence  $(G',w')$ is hyperelliptic.

Next, let $\hiota (\hv)=\hv'\neq \hv$;
with  $\hv$ and $\hv'$   non-special   and  2-valent,
then the vertex  of 
$\hG/\hiota$ corresponding to $\{\hv, \hv'\}$ is  2-valent as well. Moreover, $\hv$ and $\hv'$ have weight zero, by Lemma~\ref{hypw}.
Let us show that the graph $(G'',w'')$  obtained by removing  $\hv$ and $\hv'$ is hyperelliptic.
Now $\hiota$ maps $\he_1, \he_2$   to the two edges  adjacent to $\hv'$.
We denote by $e$ and $e'$ the new edges of $G''$. 
 We define $\iota''$ on $V(G'')=V(\hG)\smallsetminus \{\hv, \hv'\}$ by restricting $\hiota$;
next, 
we define $\iota''$ on $E(G'')$ so that $\iota''(e)=e'$ and $\iota''$ coincides with $\hiota$ on the remaining edges. 
It is clear that $\iota''$ is an involution fixing positive weight vertices and such that the quotient $G''/\iota''$ is the tree obtained from
$\hG/\hiota$ by removing the 2-valent vertex corresponding to   $\{\hv, \hv'\}$.
We have thus proved that  $(G'',w'')$ is hyperelliptic.
 The proof is now complete.
 \end{proof}
\begin{thm}
\label{bridgethm}
Let $(G,w)$ be a   stable graph of genus $g\geq 2$. Then 
the following are equivalent.
\begin{enumerate}[{\bf (A)}]
\item
\label{bralg}
$\Ma(G,w)$ contains a hyperelliptic curve.
\item
\label{brbr}
 $(G,w)$ is hyperelliptic and for every $v\in V(G)$ 
the number of bridges of $G$ adjacent to $v$ is at most $2w(v)+2$.
\item
\label{br2}
Assume $|V(G)|\neq 2$; the graph $(G,w)$ is $2$-gonal.
\end{enumerate}
\end{thm}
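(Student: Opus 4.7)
The plan is to establish the cycle (A)$\Rightarrow$(B)$\Rightarrow$(C)$\Rightarrow$(A). The implication (C)$\Rightarrow$(A) is immediate: since $d=2\leq 3$, every $2$-gonal graph is of Hurwitz type by Example~\ref{dleq3}, so Theorem~\ref{main} produces a $2$-gonal stable curve $X$ with dual graph $(G,w)$, and because $g\geq 2$ such an $X$ is hyperelliptic and lies in $\Ma(G,w)$.

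For (A)$\Rightarrow$(B), fix a hyperelliptic $X\in\Ma(G,w)$; hyperellipticity of $(G,w)$ is Proposition~\ref{mainconvh}. For the bridge bound at a vertex $v$, I would choose an admissible double cover $\alpha:\hX\to Y$ whose image under \eqref{acmap} is $[X]$ and let $\tau$ denote the Galois involution of $\alpha$. The key observation is that every separating node of $X$ lying on the component $C_v$ is fixed by $\tau$: for such a node $n$, removing it disconnects $X$ into two pieces $X_1,X_2$, and since $\tau$ is an automorphism it either preserves or swaps them, and in both situations $\tau$ must fix $n$ itself (in the preserve case because $n$ lies in both $X_i$, in the swap case because the unique intersection point is forced to be fixed). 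When $m_{\phi_{\alpha}}(v)=2$ so that $\tau$ fixes $C_v$ setwise, the restriction $\tau|_{C_v}$ is the covering involution of the degree-$2$ map $\alpha|_{C_v}:C_v\to D_{\phi_{\alpha}(v)}$, whose fixed locus has cardinality exactly $2w(v)+2$ by Riemann-Hurwitz; hence the number of bridges of $G$ at $v$, being the number of separating nodes of $X$ on $C_v$, is at most $2w(v)+2$. The residual case $m_{\phi_{\alpha}}(v)=1$ (which forces $w(v)=0$, $C_v\cong\PP^1$ and $\tau(C_v)=C_{v'}$ for a partner component) is treated by an analogous argument on the $\tau$-orbit $\{C_v,C_{v'}\}$: separating nodes on $C_v$ pair with those on $C_{v'}$ under $\tau$, and a direct analysis of the admissible cover (using the tree structure of $Y$ and stability of $(Y;y_1,\dots,y_b)$) bounds their number on $C_v$ by $2=2w(v)+2$.

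For (B)$\Rightarrow$(C), assume $|V(G)|\geq 3$ and pass to the $2$-edge-connected contraction $(\ov G,\ov w)$, hyperelliptic by Lemma~\ref{hypb}. Lemma~\ref{hypw} supplies a hyperelliptic involution $\iota$ on $\ov G$ fixing all positive-weight vertices, with tree quotient $\ov T=\ov G/\iota$, and exhibits $\ov G\to \ov T$ as a non-degenerate harmonic morphism of degree $2$. I would then construct a non-degenerate harmonic morphism $\phi:(G^0,w^0)\to T$ of degree $2$, where $T$ is obtained from $\ov T$ by attaching one new leaf-edge for each bridge of $G^0$ at the image vertex of its endpoint: on the $2$-edge-connected blocks of $G^0$ the morphism $\phi$ restricts to $\ov G\to\ov T$, while each bridge of $G^0$ is sent with index $2$ (totally ramified) onto its designated leaf-edge of $T$. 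The bridge inequality $\#\{\text{bridges at }v\}\leq 2w(v)+2$ is exactly what is needed to verify Riemann-Hurwitz \eqref{RH} at every vertex of $G^0$; non-degeneracy of $\phi$ descends from that of $\ov G\to\ov T$, and Hurwitz type is automatic (Example~\ref{dleq3}), so $(G,w)$ is $2$-gonal by Definition~\ref{dgdef}.

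The main technical obstacle is the Weierstrass-point identification in (A)$\Rightarrow$(B): making ``$\tau$-fixed separating node $=$ Weierstrass point of $\alpha|_{C_v}$'' rigorous requires careful handling of the admissible cover structure (since chains of exceptional rational components may be inserted by $\hX\to X$), and the residual case $m_{\phi_{\alpha}}(v)=1$ has to be argued via the swapped-component orbit rather than via a direct Weierstrass count on $C_v$. A secondary subtlety is the exclusion $|V(G)|\neq 2$ in (C): binary hyperelliptic graphs with $|E(G)|\geq 3$ (Corollary~\ref{binary}, Example~\ref{binex}) satisfy (A) and (B) but admit no degree-$2$ harmonic morphism to a tree, since any such morphism would collapse all parallel edges onto a single edge of the target and create a loop; for this remaining case (A)$\Leftrightarrow$(B) is verified directly by constructing an explicit admissible double cover from the binary structure.
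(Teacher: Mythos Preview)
Your cycle (C)$\Rightarrow$(A) matches the paper exactly, and your overall architecture for the other two implications is the same as the paper's. There are, however, two points where your sketch diverges in ways that matter.

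\medskip

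\textbf{(A)$\Rightarrow$(B).} The paper does not use the hyperelliptic involution $\tau$ at all. It argues directly on the admissible cover $\alpha:\hX\to Y$: since $\hX$ is semistable (Remark~\ref{acrk}\,\eqref{acdeg2}), the dual graph $(\hG,\hw)$ is a \emph{refinement} of $(G,w)$, so $v\in V(G)\subset V(\hG)$ and there is a natural identification $E_v(G)=E_v(\hG)$ sending bridges to bridges. If $\he$ is a bridge of $\hG$ at $v$, then $N_{\he}$ is a separating node of $\hX$, hence $\alpha^{-1}(\alpha(N_{\he}))=\{N_{\he}\}$, and therefore $\alpha_{|C_v}$ is ramified at that point; Riemann--Hurwitz for $\alpha_{|C_v}:C_v\to\PP^1$ then bounds the number of such points by $2w(v)+2$. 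In particular no case split on $m_{\phi_\alpha}(v)$ is needed: the very fact that $\alpha^{-1}(\alpha(N_{\he}))$ is a single point already forces the ramification index there to be $2$, so $m_{\phi_\alpha}(v)=2$ whenever $v$ carries a bridge, and your ``residual case'' $m_{\phi_\alpha}(v)=1$ is vacuous. Your involution approach can be made to work, but it is longer and you have to be careful (as you note) that $\tau$ lives on $\hX$ while the bridges you are counting live on $G$; the paper's refinement observation is exactly what bridges this.

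\medskip

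\textbf{(B)$\Rightarrow$(C).} Here there is a genuine gap. Your tree $T$ is built by \emph{attaching a leaf-edge} to $\ov T$ for each bridge of $G^0$, and sending the bridge onto that leaf with index $2$. This does not define a graph morphism as soon as a bridge is not a leaf of $G$. Take two $2$-edge-connected blocks $A,B$ joined by a single bridge $e$ with endpoints $a\in A$, $b\in B$: in $\ov G$ both $a$ and $b$ become the same vertex $\ov v$, with image $\ov u\in\ov T$. If you attach a leaf $\ell$ at $\ov u$ and send $e\mapsto\ell$, then one of $a,b$ must go to the leaf-vertex; but you also require $\phi$ to restrict to $\ov G\to\ov T$ on the blocks, which forces $\phi(a)=\phi(b)=\ov u$. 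The two constraints are incompatible. The paper's construction is different: for each bridge $e$ it \emph{splits} the vertex $\ov u$ into two vertices joined by a new edge $e'$ (distributing the incident edges of $\ov T$ according to which side of $e$ they come from), and then maps $e\mapsto e'$ with index $2$; iterating over bridges produces the commutative square~\eqref{diag1}. This vertex-splitting is what makes chains of bridges (and more generally the block--bridge tree of $G$) map correctly, and it is not equivalent to attaching leaves.

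A second omission in this implication: Lemma~\ref{hypw} only gives a non-degenerate harmonic quotient $\ov G\to\ov T$ when $|V(\ov G)|\geq 3$. The paper treats $|V(\ov G)|=2$ and $|V(\ov G)|=1$ separately (using Corollary~\ref{binary} and an explicit involution in the first case, and the identity map with all indices $2$ in the second). Your sketch assumes $|V(G)|\geq 3$ but says nothing about $|V(\ov G)|$, which can drop below $3$ after contracting bridges.
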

 \begin{proof}
 \eqref{br2} $\Rightarrow $ \eqref{bralg} by Theorem~\ref{main} and Example~\ref{dleq3}.

\eqref{bralg} $\Rightarrow $ \eqref{brbr}.
Let $X$ be a hyperelliptic curve  such that $(G_X,w_X)=(G,w)$.
Then, by Proposition~\ref{mainconvh}, $(G,w)$ is hyperelliptic.
Let $\alpha:\hX\to Y$ be an admissible covering corresponding to $X$; by Remark~\ref{acrk} \eqref{acdeg2},  $\hX$ is   semistable. 
Therefore the dual graph
of $\hX$, written $(\hG,\hw)$,    is a refinement of $(G,w)$ (as $X$ is the stabilization of $\hX$).

Let $v\in V(G)\subset V(\hG)$ and
  $C_v\subset \hX$ be the component corresponding to $v$, recall that $C_v$ is nonsingular
(by Remark~\ref{acrk}) of genus $w(v)$. Now let  $\he\in E(\hG)$
be a bridge of $\hG$
 adjacent to $v$. Then the corresponding node $N_{\he}$ of $
\hX$
is a
separating node of $\hX$, and hence $\alpha^{-1}(\alpha(N_{\he}))=N_{\he}$.
This implies that the restriction of $\alpha$ to $C_v$ ramifies at the point corresponding to $N_{\he}$.
By the Riemann-Hurwitz formula, the number of ramification points of $\alpha_{|C_v}$ is at most $2w(v)+2$, therefore
 the number of bridges of $\hG$ adjacent to $v$ is at most  $2w(v)+2$.

Now,  by construction,
we have a natural identification 
$ 
E_v(\hG)=E_v(G)
$ 
which identifies  bridges with bridges. Hence also the number of bridges of $G$ adjacent to $v$ is at most  $2w(v)+2$, and we are done.

\

\eqref{brbr} $\Rightarrow $ \eqref{br2} {\it assuming}  $|V(G)|\neq 2$. 
We can assume $|V(G)|\geq 3$ for the case $|V(G)|=1$ is clear; see Example~\ref{acex}.
Let us first assume that $G$ has no loops. By Lemma~\ref{hypb}, the  2-edge-connected graph $(\ooG, \oow)$ is hyperelliptic. 

Suppose $|V(\ooG)|>2$.
By Lemma~\ref{hypw}, $\ooG$ has an involution $\ooiota$ such that 
$$
\oophi:\ooG\la \ooT:=\ooG/\ooiota
$$
 is a non-degenerate harmonic morphism of degree 2, with $\ooT$ a tree.
Let us show that $\oophi$ corresponds to a non-degenerate pseudo-harmonic morphism of degree 2, $\phi:G\to T$,
with $T$  a tree, such that $r_{\phi}(e)=2$ for every bridge $e$. 
Suppose that $G$ has a unique bridge $e$, which is contracted to the vertex $\oov$ of $\ooG$;
let $\ov{u}=\oophi(\oov)\in V(\ooT)$.
Let $T$ be the tree obtained from   $\ooT$ by replacing the vertex $\ov{u}$ by a bridge  $e'$ 
and its two endpoints in such a way that there exists
  a morphism $\phi:G\to T$ mapping $e$ to $e'$  fitting in a
  commutative diagram
\begin{equation}\label{diag1}
\xymatrix{
G\ar[d]_{\phi}  \ar[r]   & \ooG\ar[d]_{\oophi} \\
T \ar[r] & \ooT
}
\end{equation}
where the horizontal arrows are the maps contracting $e$ and $e'$ (it is trivial to check that such a $\phi$ exists).
To make $\phi$ into an indexed morphism of degree 2 we set $r_{\phi}(e)=2$
and we set all other indices to be equal to 1. Since $\oophi$ was harmonic and non-degenerate, we have that
$\phi$ is pseudo-harmonic and non-degenerate.

If $G$ has any number of bridges, we iterate this construction   one bridge at the time. This clearly yields a pseudo-harmonic, degree 2,  non-degenerate morphism $\phi:G\to T$ where $T$ is a tree. 

We claim that condition \eqref{rest2} holds. Indeed,
we have $r_{\phi}(e)=2$ if and only if $e$ is a bridge.
Therefore \eqref{rest2} needs only be verified at the vertices of $G$ that are adjacent to some bridge; notice that for any such vertex $v$ we have $m_{\phi}(v)=2$.
Writing ${\rm{brdg}}(v)$ for the number of bridges adjacent to $v$, we have, as by hypothesis, ${\rm{brdg}}(v)\leq 2w(v)+2$,
$$
\sum_{e\in E_v(G)\cap \Eh _{\phi}(G)}(r_{\phi}(e)-1) 
\leq {\rm{brdg}}(v)\leq 2w(v)+2=
 2(w(v)+m_{\phi}(v)-1). 
$$
This proves that  \eqref{rest2} holds, that is, $(G,w)$ is a  2-gonal graph.
So we are done.

Suppose $|V(\ooG)|=2$, 
hence the bridges of $G$ are leaf-edges.
By Corollary~\ref{binary}, if $|E(\ooG)|\geq 3$, then all the weights are zero, hence, as $G$ is stable, $G=\ooG$,
which is excluded. If $|E(\ooG)|=2$, then the vertices 
  must be fixed by the hyperelliptic involution (for otherwise they would have weight zero by Lemma~\ref{hypw}, contradicting that the genus be at least $2$). 
But then
$\ooG$ has clearly an involution $\ooiota$ swapping its two edges and fixing the two vertices, whose quotient  is a non-degenerate harmonic morphism of degree 2 to a tree, as in the previous part of the proof, which therefore applies also in the present case.

Suppose $|V(\ooG)|=1$. Then $G$ is a tree, hence the identity map $G\to G$ with all indices equal to $2$ is a pseudo-harmonic morphism, $\phi$, of degree $2$. Arguing as in the previous part we get $\phi$ is harmonic; so we are done.

Finally, suppose $G$ admits some loops. 
Let $(G^0,w^0)$ be the loopless model;
then $|V(G^0)|\geq 3$. By the previous part we have that 
  $(G^0,w^0)$ is $2$-gonal, hence so is $(G,w)$.
  
   \

\eqref{brbr} $\Rightarrow $ \eqref{bralg} {\it assuming}  $|V(G)|=2$. 
If $G$ has loops, then  $|V(G^0)|\geq 3$ and we can use   the previous implications
\eqref{brbr} $\Rightarrow $ \eqref{br2}   $\Rightarrow$ \eqref{bralg}. So we   assume $G$ loopless.
By \cite{HM}, 
 hyperelliptic curves with two components are easy to describe.
Let $X=C_1\cup C_2$ with $C_i$ smooth, hyperelliptic  of genus $w(v_i)$ and such  that  $X\in \Ma(G,w)$.
If  $|E(G)|= 1$  for  $X$ to be hyperelliptic it suffices to
 glue $p_1\in C_1$ to $p_2\in C_2$ with $p_i$   Weierstrass point of $C_i$ for $i=1,2$.   

If  $|E(G)|= 2$ for  $X$ to be hyperelliptic it suffices to
  glue $p_1,q_1\in C_1$ to $p_2,q_2\in C_2$ with $h^0(C_i,p_i+q_i)\geq 2$   for $i=1,2$.  

If  $|E(G)|\geq 3$,
by Corollary~\ref{binary}   all  weights are zero.
For  $X$ to be hyperelliptic it suffices to pick two copies of the same rational curve with $|E(G)|$ marked points, and glue the two copies at the corresponding marked points. 
The theorem is proved.
 \end{proof}


\begin{thebibliography}{EGKH02}
 \bibitem[AC11]
 {AC} Amini, O.; Caporaso,  L.:
 {\it Riemann-Roch theory for weighted graphs and tropical curves.}
Adv. in Math. 240 (2013) 1--23.   \bibitem[AC81]{ACo} 
Arbarello, E.; Cornalba, M.: 
{\it Su una congettura di Petri.} Comment. Math. Helvetici (1981) 1--38.
\bibitem[GAC]{gac} Arbarello, E.;   Cornalba, M.;  Griffiths, P. A.: 
\emph{Geometry of algebraic curves. Vol. II.} With a contribution by  Harris J. D.
Grundlehren der Mathematischen Wissenschaften 268, 
Springer-Verlag (2011).
\bibitem[BdlHN97]{BdlHN} Bacher, R.; de la Harpe, P.; Nagnibeda, T.:
{\it The lattice of integral flows and the lattice of integral cuts
on a finite graph.}  Bull. Soc. Math. France {\bf 125} (1997), 167--198.


\bibitem[BN07]{BNRR}Baker, M.; Norine, S.: {\it Riemann-Roch and Abel-Jacobi theory on a finite graph.} Adv. Math.  215  (2007),  no. 2, 766--788.

\bibitem[BN09]{BN}Baker, M.; Norine, S.: {\it Harmonic morphisms and hyperelliptic graphs.} IMRN (2009),  no. 15, 2914--2955.

 \bibitem[B08]{bakersp} Baker, M.:
 {\it Specialization of linear systems from curves to graphs.}
 Algebra Number Theory 2 (2008), no. 6, 613--653. 
 \bibitem[BPR11]{BPR} Baker, M.; Payne, S.; Rabinoff, J.: {\it Nonarchimedean geometry, tropicalization, and metrics on curves.}   Preprint available at  arXiv:1104.0320.

%\bibitem [BPV]{BPV}  Barth, W.;  Peters, C.;  Van de Ven, A.:
%\newblock Compact complex surfaces
%\newblock Ergebnisse der Mathematik 4 Springer 1984.


 \bibitem [BLR]{BLR}  Bosch, S.;  L\"utkebohmert, W.;  Raynaud M.:
 \newblock \emph{ N\'eron models.}
 \newblock Ergebnisse der Mathematik 21 Springer 1990.

\bibitem[BMV11]{BMV} Brannetti, S.; Melo, M.; Viviani, F.: \emph{On the tropical  Torelli map.} 
Adv. in Math.  
226 (2011), 2546--2586. %Available at arXiv:0907.3324 v3.

 \bibitem[C11]{Chbk} Caporaso, L.: \emph{Algebraic  and tropical curves: comparing their moduli spaces.} 
 To appear in the Volume:  Handbook of Moduli. Edited by G. Farkas and I. Morrison. Available at Math arXiv: 1101.4821.
 %\bibitem[C11b]{BNgraph} Caporaso, L.: \emph{Algebraic and combinatorial Brill-Noether theory.}
%Compact moduli spaces and vector bundles 69--85, Contemp. Math., 564.
%Edited by V. Alexeev, E. Izadi, A. Gibney, J. Koll\'ar, E. Loojenga AMS Bookstore. 
 


\bibitem[Ch11]{chan} Chan, M.: \emph{Tropical hyperelliptic curves.}
 Preprint. Available at arXiv:1110.0273.

\bibitem[DM69]{DM}
Deligne, P.; Mumford, D.: 
\newblock \emph{ The irreducibility of the space of curves of given genus.}
\newblock Inst. Hautes \'Etudes
Sci. Publ. Math. No 36 (1969) 75--120.


 \bibitem[GK08] {GK} Gathmann, A.; Kerber, M.:\emph{ A Riemann-Roch theorem in tropical geometry.} Math. Z. 259 (2008), no. 1, 217--230.
\bibitem[HMo]{HMo}
Harris, J.; Morrison, I.: \emph{Moduli of curves.} Graduate Texts in Mathematics, 187. Springer-Verlag, New
York, 1998.

\bibitem[HM82]{HM} Harris, J.; Mumford, D.: {\it On the Kodaira dimension of the moduli space of curves.}   Invent. Math. 67 (1982), no. 1, 23--88.

\bibitem[LPP]{LPP}   Lim C.L.;  Payne S.; Potashnik N.:
\emph{A note on Brill-Noether theory and rank determining sets for metric graphs}
 Int. Math. Res. Not. IMRN 2012, no. 23, 5484--5504.
\bibitem[L11] {luo} Luo, Y.: \emph{Rank-determining sets of metric graphs.} 
J. Comb. Theory, Ser. A 118 (2011), no. 6, 1775--1793.



  \bibitem[MZ]{MZ} Mikhalkin, G.; Zharkov, I.:
 \emph{Tropical curves, their Jacobians and Theta functions.}
 Curves and abelian varieties, 203--230, Contemp. Math., 465, Amer. Math. Soc., Providence, RI, 2008. 
 
   \bibitem[PP]{PP} Pervova, E.; Petronio, C.: {\it On the existence of branched coverings between surfaces with prescribed branch data. I.}  Algebr. Geom. Topol. 6 (2006), 1957--1985.
   
 \bibitem[U00]{U}  Urakawa, H.:  \emph{A discrete analogue of the harmonic morphism and Green kernel comparison theorems}. Glasg. Math. J. 42 (2000), no. 3, 319--334. 
\end{thebibliography}
\end{document}